\documentclass[10pt, a4paper]{article}
\usepackage[utf8]{inputenc}
\usepackage{amsfonts}
\usepackage{amsmath}
\usepackage{amsthm}
\usepackage{amssymb}
\usepackage{mathrsfs}
\usepackage{xcolor}
\usepackage[colorlinks]{hyperref} % hyperlinks
\hypersetup{citecolor=blue}
\usepackage{cleveref}
\usepackage[round]{natbib}
\usepackage{xspace}
\usepackage{comment}
\usepackage{graphicx}

\crefname{appendix}{appendix}{appendices}
\Crefname{appendix}{Appendix}{Appendices}

\newcommand{\comm}[1]{\textcolor{orange}{#1}}

\newcommand{\A}{\mathcal A}
\newcommand{\B}{\mathcal B}
\newcommand{\bHhP}{\bar\Hh_{\Phi,S}}

\newcommand{\E}{\mathbb E}
\newcommand{\e}{e}
\newcommand{\Ee}{\mathcal E}
\newcommand{\EeP}{\Ee_{\LP}}
\newcommand{\Ell}{\mathcal L}
\newcommand{\Hh}{\mathcal H}
\newcommand{\HhP}{\Hh_{\Phi,S}}
\newcommand{\I}{\mathcal I}
\newcommand{\LP}{\Lambda_{\Phi,S}}

\newcommand{\PR}{\mathscr{P}}
\newcommand{\R}{\mathbb R}
\newcommand{\ROOT}{\circledast}
\newcommand{\sP}{\sigma_{\Phi,S}}
\newcommand{\T}{\mathcal T}
\newcommand{\Tt}{\mathbb T}

\newcommand{\V}{\mathcal{V}}
\newcommand{\X}{\mathcal X}
\newcommand{\XP}{\X_{\Phi,S}}

\newcommand{\Z}{\mathcal Z}

\DeclareMathOperator{\aff}{aff}
\DeclareMathOperator{\ch}{ch}
\DeclareMathOperator{\conv}{conv}
\DeclareMathOperator{\pa}{pa}
\DeclareMathOperator{\relint}{relint}

\newtheorem{lemma}{Lemma}
\newtheorem{proposition}{Proposition}
\newtheorem{theorem}{Theorem}
\newtheorem{corollary}{Corollary}

\newcommand{\reptile}{}
\newtheorem*{repinner}{\reptile}

\newenvironment{reflem}[1]{%
  \renewcommand{\reptile}{Lemma \ref{#1}}%
  \begin{repinner}
}{%
  \end{repinner}
}

\author{Eugenio Clerico\footnote{Correspondence to eugenio.clerico@gmail.com}}
\date{\small Department of Statistics, University of Oxford}
\title{Sequential testing of\\ conditionally constrained hypotheses} 

\begin{document}
\maketitle
\begin{abstract}
We  explicitly characterise the full class of e-processes for testing conditional non-parametric hypotheses, defined by finitely many conditional constraints. Our main result is a complete-class theorem: every e-process for such a hypothesis is pointwise dominated by a predictable product of affine one-step e-variables. Therefore, for a broad class of conditional testing problems, arbitrary e-processes can be replaced without loss by test supermartingales. This extends previous complete-class results from single-step constrained testing and bounded one-dimensional conditional mean testing to a broader conditional sequential setting.
\end{abstract}

\section{Introduction}\label{sec:intro}
Hypothesis testing with e-values has recently emerged as a powerful framework for sequential and adaptive statistical inference \citep{shafer2021testing,vovk2021evalues,grunwald2024safe,ramdas2023game,ramdas2024hypothesis}. An \emph{e-variable} is a non-negative statistic whose expectation is bounded by one under the null hypothesis, and whose realised value is called an \emph{e-value}. E-values can be viewed as generalisations of likelihood ratios and Bayes factors \citep{shafer2011test,wasserman2020universal}. Large e-values provide evidence against the null, since Markov's inequality immediately turns them into valid tests. Their usefulness is especially apparent in sequential settings: if, at each round, one chooses an e-variable conditionally on the past, then their product is a non-negative supermartingale under the null. Ville's inequality \citep{ville1939etude} controls the probability that such a product ever becomes large and therefore yields anytime-valid tests, meaning that the data may be inspected repeatedly and the experiment stopped at a data-dependent time, without invalidating the inference \citep{ramdas2024hypothesis}.

Products of e-values, however, are not the most general objects available for sequential testing. A broader notion is that of an \emph{e-process}: a non-negative random process, adapted to the sequence of observations, whose expectation is bounded by one under the null at every stopping time \citep{shafer2021testing}. Every non-negative supermartingale with initial value at most one is an e-process, but the converse need not hold in general.  This distinction is not merely formal: in some testing problems, e-processes can be strictly more powerful than test supermartingales. For example, when testing exchangeability, this difference is essential, since test supermartingales are powerless while suitable e-processes can  provide evidence against the null \citep{ramdas22testing}. Yet, test (super)martingales still often play a central role. In the classical simple-null versus simple-alternative setting, the likelihood-ratio process is a test martingale under the null and underlies Wald's sequential probability ratio test. More generally, when the null is simple, every e-process is pointwise dominated by a non-negative martingale. For composite nulls, \citet{ramdas2020admissible} show that non-negative martingales remain universal for anytime-valid inference in an admissibility sense: after fixing a single distribution in the null, every e-process can be dominated by a martingale. This pointwise martingale domination, however, does not by itself provide a single  dominating test supermartingale that is valid uniformly over the whole composite null. Nevertheless, for \emph{conditional} hypotheses, it is natural to ask whether a stronger unicorn statement is possible. In particular, when the null hypothesis is specified by restrictions on the conditional law of the next observation given the past, products of one-step e-values are the most natural sequential objects, much as likelihood-ratio products arise in simple sequential testing. In general non-parametric settings, a first positive rigorous result in this direction was obtained by \citet{clerico2025optimality} for bounded one-dimensional conditional mean sequential testing. They showed that every e-process for the conditional mean hypothesis is dominated by a \emph{coin-betting} martingale. This suggests that, at least for certain conditional non-parametric hypotheses, arbitrary e-processes offer no advantage over test supermartingales. 

The proof of the one-dimensional coin-betting result of \citet{clerico2025optimality} is based on two main ingredients. First, the single-step mean constraint yields an explicit affine characterisation of the relevant single-round e-variables. Second, this single-step description can be iterated over time by choosing the affine coefficient predictably, as a function of the past. The first ingredient has since been extended beyond bounded one-dimensional mean testing. In \citet{clerico2024optimal}, the single-step affine-domination argument is developed for a broader class of non-parametric hypotheses defined through finitely many sufficiently regular linear constraints. Subsequently, \citet{larsson2026testing} generalised this result further, allowing essentially no regularity assumptions and an arbitrary number of linear constraints. These results, however, apply to the single-step setting and do not tackle the conditionally-extended sequential case. Whether or not test supermartingales (in the form of products of predictably chosen one-step affine e-values) are sufficient to dominate all e-processes for the corresponding conditional sequential hypotheses was highlighted as a relevant open question by both works.

In this paper, we give a partial positive answer to this question. We consider non-parametric hypotheses defined by a finite-dimensional constraint map, whose conditional mean under the null is required to belong (for each step) to an arbitrary non-empty convex set. Our main result shows that every e-process for such a conditional constrained hypothesis is pathwise dominated by a predictable product of affine one-step e-variables. Hence, for this class of conditional non-parametric hypotheses, allowing arbitrary e-processes gives no additional power over restricting to test supermartingales. Although this may appear to be a direct sequential extension of the one-step results of \citet{clerico2024optimal} and \citet{larsson2026testing}, there are non-trivial technical difficulties. The main one is measurability. Indeed, even if an affine dominator exists after each fixed history, it need not be possible to choose these dominators predictably as functions of the past. We resolve this selection problem using a recent Borel-selection result \citep{clerico2026borel}, which allows the one-step complete-class result to be lifted to the conditional sequential setting. To the best of our knowledge, this gives the first explicit characterisation of e-processes for a broad class of non-parametric conditional hypotheses beyond the one-dimensional mean case.

The constrained framework that we consider covers several familiar non-parametric restrictions. Conditional mean constraints are the basic example, with bounded-mean betting and confidence-sequence methods recently studied by \citet{orabona2024tight}, \cite{waudby2024estimating}, \cite{shekhar2023near}, and \cite{chugg2025closed}, among others. Joint restrictions on conditional means and second moments are also included, and related e-process tests for mean and variance constraints were studied by \citet{fan2023testing}. Bounded higher-moment assumptions as well fit the framework and are common in heavy-tailed sequential learning and mean estimation \citep{agrawal2021regret,wang2023catoni}. Finally, finite distributional constraints can often be encoded by indicator functions (for example, quantile restrictions are one of the examples discussed by \citealt{larsson2026testing}).

\paragraph{Notation.}
Given two integers $a$ and $b$, we write $[a:b]$ for the set of integers between $a$ and $b$, endpoints included. This set is empty if $a>b$.

Let $\Z$ be any set. For an integer $k\geq 1$, we write $\Z^k$ for the $k$-fold Cartesian product of $\Z$. Elements of $\Z^k$ are denoted by $z^k=(z_1,\dots,z_k)$. We also set $\Z^0=\{\emptyset\}$ and write $z^0=\emptyset$. The space of infinite sequences with values in $\Z$ is denoted by $\Z^\infty$. Its elements are typically written either as $(z_t)_{t\geq t_0}$, when the starting index $t_0$ is relevant, or simply as $z^\infty$. For $T\geq t_0$, we write $z^{T:\infty}$ for the tail sequence starting from time $T$.

If $\Z$ is a measurable space, $P$ is a probability measure on $\Z$, and $f:\Z\to D$ is a measurable function, where $D$ is a measurable subset of $[-\infty,\infty]^m$, we write $\E_P[f(X)]$, or simply $\E_P[f]$, for the expectation of $f$ under $P$, whenever this expectation is well defined. In expectations, integrals, and sums representing expectations, we use the usual measure-theoretic convention $0\cdot\infty=0$, unless stated otherwise. Subsets of $\R^n$ and $[-\infty,\infty]^n$ are endowed with their Borel sigma-fields, generated by the usual Euclidean topology and the usual extended-real topology, respectively.

We use the standard notation $\conv A$, $\aff A$, and $\relint A$ for the convex hull, affine hull, and relative interior of a set $A$, respectively. The corresponding definitions and convex-analytic facts used in the paper are collected in \Cref{app:convex}.
\section{Setting}
Throughout the paper, we let $\X$ be a standard Borel space. This assumption is satisfied by virtually all measurable spaces encountered in practice, including $\mathbb{R}^n$, any of its Borel subsets, and any Polish space equipped with its Borel sigma-field. We denote by $\PR$ the set of all Borel probability measures on $\X$. A \emph{hypothesis} is a subset $\Hh\subseteq \PR$, and an \emph{e-variable} for $\Hh$ is a Borel function $E:\X\to[0,\infty]$ such that $\E_P[E]\leq 1$ for every $P\in\Hh$.

This paper main focus is on the sequential testing framework, where multiple observations are observed. For any $T\geq 1$ (possibly $T=\infty$), we endow $\X^T$ with the product sigma-field (or the sigma-field generated by cylinders if $T=\infty$). With this choice, $\PR^T$  is the set of probability measures on $\X^T$. We let $\T_T$ denote the set of all stopping times\footnote{We always consider stopping times  relative to the natural filtration, induced by the projections on the components.} valued in $[0:T]$ ($\T_\infty$ is the set of all stopping times, possibly non-finite), while $\T_\star=\bigcup_{t=1}^\infty\T_T$ is the set of all bounded stopping times. We call a \emph{non-negative process} on $\X^\infty$ a sequence $(f_t)_{t\geq 0}$ of Borel functions, with $f_t:\X^t\to[0,\infty]$ for all $t$. Given a hypothesis $\Hh\subseteq\PR^\infty$, an e-process $E = (E_t)_{t\geq 0}$ is a non-negative process that satisfies $E_0\leq 1$ and  $\E_P[E_\tau]\leq 1$, for every $\tau\in\T_\star$ and $P\in\Hh$. Notably, it is equivalent to require the same property  for every $\tau\in\T_\infty$ (see Lemma 6 in \citealt{ramdas2020admissible}).

We say that an e-variable $E'$ \emph{majorises} an e-variable $E$ if $E'(x)\geq E(x)$ for every $x\in\X$. Similarly, an e-process $E'$ majorises an e-process $E$ if, for every $x^\infty\in\X^\infty$ and every $t\geq 0$, $E'_t(x^t)\geq E_t(x^t)$. These order relations are natural for testing: replacing an e-variable or e-process by a larger valid one can only make the resulting threshold tests more powerful.
\paragraph{Constrained hypotheses.}
Let $\Phi:\X\to\R^m$ be a Borel map. We denote by $\PR_\Phi$ the set of probability measures under which $\Phi$ is integrable:
$$\PR_\Phi = \{P\in\PR\,:\,\E_P[\|\Phi\|_1]<\infty\}\,.$$
Let $S\subseteq\R^m$ be a non-empty convex set. We define the single-step constrained hypothesis associated with $(\Phi,S)$ as
$$\HhP = \{P\in\PR_\Phi\,:\,\E_P[\Phi]\in S\}\,.$$

We now pass to the conditional sequential setting. For $T\in[1:\infty]$, define
$$\PR_\Phi^T = \{P\in\PR^T\,:\,\E_P[\|\Phi(X_t)\|_1]<\infty\,,\;\forall\text{ finite }t\in[1:T] \}\,.$$
The corresponding conditional constrained hypothesis on $\X^T$ is
$$\HhP^T = \{P\in\PR_\Phi^T\,:\,\E_P[\Phi(X_t)\mid X^{t-1}]\in S\,,\;\forall\text{ finite } t\in [1:T]\,,\;P\text{-a.s.}\}\,.$$
For $t=1$, the conditioning is on the trivial history $X^0=\emptyset$, so that
$$\E_P[\Phi(X_1)\mid X^0]=\E_P[\Phi(X_1)]\,.$$

This formulation covers several standard examples. If $\X=[a,b]\subseteq\R$, $\Phi(x)=x$, and $S=\{\mu\}$, then $\HhP$ is the class of distributions on $[a,b]$ with mean $\mu$, and $\HhP^T$ imposes $\E_P[X_t\mid X^{t-1}]=\mu$ at each time. This is the bounded-mean setting underlying betting-based confidence sequences \citep{orabona2024tight,waudby2024estimating}. Taking $\X=\R^d$, $\Phi(x)=x$, and $S=\{y\in\R^d\,:\,\|y\|_2\leq 1\}$ gives a mean constraint in the Euclidean unit ball. Finally, with $\X=\R$, $\Phi(x)=(x,x^2)$, and $S=\{\mu\}\times[0,B]$, where $B\geq\mu^2$, the single-step hypothesis becomes $\E_P[X]=\mu$ and $\E_P[X^2]\leq B$, and its conditional version imposes the analogous conditional constraints given the past \citep{agrawal2021regret,wang2023catoni}. 

We remark that, when $S$ is closed, the single-step hypothesis $\HhP$ can also be written in the language of \citet{larsson2026testing}: indeed, $\E_P[\Phi]\in S$ is equivalent to a family of linear expectation constraints in their constrained-hypothesis framework. The main focus of this paper, however, is the conditional sequential setting, and the single-step results below should be read mainly as a simplified guide to the objects and arguments that will later enter the sequential characterisation.

\section{Single-step e-variable characterisation}

We first study the single-step problem. As discussed above, this case is not the main object of the paper, but it introduces the affine e-variables that will later be selected predictably and iterated in the sequential setting.

We begin by identifying the \emph{effective domain}, namely the part of the sample space that is ``visible'' under the null. Define
$$\XP = \left\{x\in\X\,:\,\exists P\in\HhP\,,\;P(\{x\})>0\right\}\,.$$
Thus, $\XP$ is the set of points that have non-zero mass under at least one distribution in the null hypothesis. Points outside $\XP$ play no role for validity, and a dominating e-variable may therefore take value $+\infty$ on them.

\begin{lemma}\label{lemma:XP}
    $\XP$ is a Borel subset of $\X$ and $P(\XP)=1$ for every $P\in\HhP$. 
\end{lemma}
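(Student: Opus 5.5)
The plan is to describe $\XP$ explicitly as the preimage under $\Phi$ of a closed affine set, which gives Borel measurability at once, and then to show that every $P\in\HhP$ puts all its $\Phi$-mass on that affine set. If $\HhP=\emptyset$ then $\XP=\emptyset$ and there is nothing to prove, so assume $\HhP\neq\emptyset$.

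Let $C=\conv\Phi(\X)\subseteq\R^m$. I will first use the standard convex-analytic fact that the mean of an integrable probability measure on a set $A\subseteq\R^m$ lies in $\conv A$. This is proved by induction on $\dim\aff A$: if the mean fell outside $\conv A$, separate it by a hyperplane, observe that the measure must then be concentrated on the supporting hyperplane, and recurse. Consequently $\E_P[\Phi]\in C$ for all $P\in\PR_\Phi$. Set $D=S\cap C$, which is non-empty because $\HhP\neq\emptyset$. Conversely, every point of $D$ is the mean of a finitely supported $P$, since $c\in C$ is a finite convex combination of values $\Phi(x_i)$. Now $x\in\XP$ iff there exist $\lambda\in(0,1]$ and $P=\lambda\delta_x+(1-\lambda)Q$ with $\E_P[\Phi]\in S$. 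By the two facts just stated, this holds iff there are $c\in C$ and $d\in D$ with $d=\lambda\Phi(x)+(1-\lambda)c$. In words, either $\Phi(x)\in D$ or $\Phi(x)$ is an endpoint of a segment in $C$ whose relative interior meets $D$. Let $F$ be the smallest face of $C$ containing $D$; it is the minimal face of $C$ containing any point of $\relint D$. I then claim that this condition says exactly $\Phi(x)\in F$: a point $y\in C$ lies in the minimal face of some $d$ iff $d$ lies in the relative interior of a segment $[y,c]\subseteq C$ (or $y=d$). Faces are nested, so the union over $d\in D$ of the minimal faces of the points $d$ equals $F$. Using $F=C\cap\aff F$ (valid for faces) and $\Phi(\X)\subseteq C$, we get
$$\XP=\Phi^{-1}(F)=\Phi^{-1}(\aff F),$$
which is Borel because $\aff F$ is closed and $\Phi$ is Borel. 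Crucially, this avoids needing $S$ itself to be Borel; an arbitrary convex set need not be.

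For the second claim, fix $P\in\HhP$ with mean $d\in D\subseteq F$, and suppose $p=P(\Phi\notin F)>0$. If $p=1$, the mean $d$ of a measure supported on $C\setminus F$ would lie in $F$. Otherwise decompose $d=(1-p)\,d_F+p\,c'$, where $d_F\in\conv F=F$ and $c'\in C$ are the conditional means on $\{\Phi\in F\}$ and $\{\Phi\notin F\}$. By the face property, $c'\in F$. Either way we reach a contradiction once we show that \emph{a measure concentrated on $C\setminus F$ cannot have its mean in $F$}. This is the step I expect to be the main obstacle, since $C\setminus F$ need not be nicely structured.

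I would attack it by induction on $\dim C$, using the fact that a proper face $F$ of $C$ lies in a proper face $G$ exposed by a supporting hyperplane $H$ with $F\subseteq G=C\cap H$. If the mean lies in $G$, the measure is concentrated on $H$, hence on $G$, because $\Phi$ lies on one side of $H$ with equality in expectation. We can then replace $C$ by $G$, of smaller dimension, in which $F$ is still a face, and recurse until $F=C$; at that point the measure has no mass outside $F$, a contradiction. Hence $\Phi(X)\in F$ $P$-a.s., that is, $P(\XP)=1$.
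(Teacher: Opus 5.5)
Your proposal is correct. For the Borel part it follows essentially the paper's route (\Cref{lemma:exF}). In both, $\XP=\Phi^{-1}(\aff F)$, where $F$ is the minimal face of $C=\conv\Phi(\X)$ containing a relative-interior point of $C\cap S$. You get the double inclusion by writing $\XP$ as the preimage of the union of the minimal faces $F_d$, $d\in D$, via the segment characterisation of minimal faces. The real justification is that $F_d\subseteq F$ for all $d\in D$ and $F_{d_0}=F$ for $d_0\in\relint D$; the paper instead argues the two inclusions directly.

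For $P(\XP)=1$ your route genuinely differs. The paper does not use the face structure there at all. It splits off the part of $P$ living on $\X\setminus\XP$ and replaces it by a finitely supported law on $\X\setminus\XP$ with the same $\Phi$-mean (\Cref{lemma:convexhull}). This gives an element of $\HhP$ charging a point outside $\XP$, contradicting the definition of $\XP$. You instead prove the more structural fact that any integrable law whose $\Phi$-mean lies in a face $F$ of $C$ is concentrated on $\Phi^{-1}(F)$. That holds for every face, but costs more.

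Moreover, the step you flag as the main obstacle is immediate from the mean-in-convex-hull fact you already invoke. A law concentrated on the Borel set $\{\Phi\notin F\}=\Phi^{-1}(\R^m\setminus\aff F)$ has its mean in $\conv(\Phi(\X)\setminus F)$. So that mean is a convex combination, with positive weights, of finitely many points of $C\setminus F$, and the face property (iterated over finite combinations) forbids such a point from lying in $F$. Your induction through exposed faces is valid, since a proper face misses $\relint C$ and hence lies in a non-trivial supporting hyperplane. However, it only re-runs the induction behind the convex-hull lemma and can be dropped.
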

\begin{proof}
    See \Cref{app:tech}.
\end{proof}

We will consider e-variables that are affine functions of $\Phi$ on the effective domain $\XP$. We first introduce the support function $\sP:\R^m\to[-\infty,\infty]$ given by
$$\sP(\lambda) = \sup_{y\in S\cap\aff\Phi(\XP)}\lambda\cdot y\,,$$
with $\aff\emptyset = \emptyset$ and $\sup\emptyset = -\infty$ if necessary. The role of $\sP$ is to give the largest possible value of $\lambda\cdot y$ over the feasible values of the mean of $\Phi$. The intersection with $\aff\Phi(\XP)$ reflects the fact that, under the null, $\E_P[\Phi]$ can only lie in the affine hull generated by the values of $\Phi$ on $\XP$. Thus, $\sP(\lambda)$ is the relevant upper bound for $\lambda\cdot\E_P[\Phi]$, when $P\in\HhP$. We now define
$$\LP = \{\lambda\in\R^m\,:\,1+\lambda\cdot\Phi(x)\geq\sP(\lambda)\,,\;\forall x\in\XP\}\,.$$
Moreover, $\lambda\in\LP$ precisely when $1+\lambda\cdot\Phi(x)-\sP(\lambda)\geq 0$ for every point $x$ that can occur under the null. This ensures that, for each $\lambda\in\LP$, we may define the non-negative map $\e_\lambda:\X\to[0,\infty]$ by
$$\e_\lambda(x) = \begin{cases} 1+\lambda\cdot\Phi(x)-\sP(\lambda) & \text{if $x\in\XP$;}\\ +\infty & \text{if $x\notin\XP$.}\end{cases}$$
We denote by $\EeP$ the set $\EeP = \{\e_\lambda\,:\,\lambda\in\LP\}$.
\begin{lemma}\label{lemma:EePevar}
    The elements of $\EeP$ are e-variables for $\HhP$. 
\end{lemma}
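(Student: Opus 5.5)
Fix $\lambda\in\LP$ and $P\in\HhP$; we must show that $\e_\lambda$ is a non-negative Borel function with $\E_P[\e_\lambda]\leq 1$. We first dispose of measurability and degenerate cases. By \Cref{lemma:XP}, $\XP$ is Borel, so $\e_\lambda$, which is affine in the Borel map $\Phi$ on $\XP$ and $+\infty$ off it, is Borel. Non-negativity on $\XP$ is exactly the defining condition of $\LP$. Next we rule out infinite values of $\sP(\lambda)$. If $\HhP=\emptyset$ there is nothing to prove. Otherwise $\XP\neq\emptyset$. We will check below that $S\cap\aff\Phi(\XP)\ni\E_P[\Phi]$, so $\sP(\lambda)>-\infty$. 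Moreover, the condition $1+\lambda\cdot\Phi(x)\geq\sP(\lambda)$ at any single $x\in\XP$ forces $\sP(\lambda)<\infty$. Hence $\e_\lambda$ is a genuine real affine function of $\Phi$ on $\XP$.

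The expectation is then computed as follows. By \Cref{lemma:XP}, $P(\XP)=1$, so the value $+\infty$ off $\XP$ is irrelevant under the convention $0\cdot\infty=0$. Since $P\in\PR_\Phi$, the function $\Phi$ is $P$-integrable, and linearity gives
$$\E_P[\e_\lambda]=1+\lambda\cdot\E_P[\Phi]-\sP(\lambda)\,.$$
It therefore suffices to show that $\lambda\cdot\E_P[\Phi]\leq\sP(\lambda)$, that is, that $\E_P[\Phi]\in S\cap\aff\Phi(\XP)$. Membership in $S$ holds by the definition of $\HhP$.

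The only real step is showing $\E_P[\Phi]\in\aff\Phi(\XP)$, which is the main obstacle. Let $A=\aff\Phi(\XP)$, a finite-dimensional affine subspace. We write it as $A=\{y:\ Ly=c\}$ for some linear map $L$ and vector $c$ (with $L=0,c=0$ if $A=\R^m$). Then $L\Phi(x)=c$ for every $x\in\XP$. Since $P(\XP)=1$, we get $L\Phi=c$ $P$-a.s., and by integrability $L\,\E_P[\Phi]=c$, so $\E_P[\Phi]\in A$. The one point requiring care is that $\aff\Phi(\XP)$ is automatically closed, being an affine subspace of $\R^m$. This is what lets us pass from pointwise membership to membership of the mean without any closure issues. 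The ordinary convex hull would not suffice here, since the mean need not lie in $\conv\Phi(\XP)$ when that set is not closed. This also justifies the earlier claim that $\sP(\lambda)>-\infty$ whenever $\HhP\neq\emptyset$, which completes the proof.
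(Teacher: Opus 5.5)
Your proof is correct. It follows the paper's outline, reducing everything to $\E_P[\Phi]\in S\cap\aff\Phi(\XP)$ and then using the definition of $\sP$, but you establish the affine-hull membership differently. The paper restricts $P$ to $\XP$ and invokes \Cref{lemma:convexhull} to get $\E_P[\Phi]\in\conv\Phi(\XP)\subseteq\aff\Phi(\XP)$. You instead write $\aff\Phi(\XP)$ as the solution set of linear equations $Ly=c$ and integrate them, which needs nothing beyond linearity of expectation. Since $\sP$ only involves the affine hull, this is the more elementary, self-contained route for this particular lemma. The paper's choice costs it nothing, because the stronger convex-hull fact is needed elsewhere anyway (for instance in \Cref{lemma:XP} and \Cref{lemma:supsup}, to replace a general law by a finitely supported one with the same mean of $\Phi$). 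You also make explicit that $\sP(\lambda)$ is finite whenever $\HhP\neq\emptyset$, which the paper leaves implicit.

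One of your side remarks is wrong, though. You claim that ``the ordinary convex hull would not suffice here, since the mean need not lie in $\conv\Phi(\XP)$ when that set is not closed.'' This is false. \Cref{lemma:convexhull} shows that for any Borel $\Phi$ on a standard Borel space and any $P$ under which $\Phi$ is integrable, $\E_P[\Phi]\in\conv\Phi(\X)$, with no closedness assumption. The argument is an induction on dimension: if the mean were outside the hull, a separating hyperplane would force $P$ to concentrate on a lower-dimensional slice. So the convex hull would work, and that is exactly the paper's route. The remark does not affect the validity of your argument, but you should remove it.
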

\begin{proof}
    Fix $\lambda\in\LP$. $\e_\lambda$ is Borel-measurable by \Cref{lemma:XP}. Moreover, $\e_\lambda$ is non-negative by definition of $\LP$. Now fix $P\in\HhP$. By \Cref{lemma:XP}, $\XP$ is Borel and $P(\XP)=1$. Hence we may regard $P$ as a probability measure on $\XP$ and restrict $\Phi$ to $\XP$. Therefore, $\E_P[\Phi]\in\conv\Phi(\XP)\subseteq\aff\Phi(\XP)$ (this is \Cref{lemma:convexhull} in \Cref{app:convex} applied to $\Phi|_{\XP}$). Since $P\in\HhP$, we also have $\E_P[\Phi]\in S$. Therefore, $\E_P[\Phi]\in S\cap\aff\Phi(\XP)$. In particular, by the definition of $\sP$, we have $\lambda\cdot\E_P[\Phi]\leq\sP(\lambda)$. Consequently, $\E_P[\e_\lambda]\leq 1$.
\end{proof}

The set of all the e-variables for $\HhP$ can be  characterised in terms of  $\EeP$.
\begin{proposition}\label{prop:evar}
    A  Borel function $E:\X\to[0,\infty]$ is an e-variable for $\HhP$ if and only if there exists $\hat E\in\EeP$ such that $E(x)\leq\hat  E(x)$ for every $x\in\X$.
\end{proposition}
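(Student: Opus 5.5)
The \emph{if} direction is immediate. If $E\le \hat E=\e_\lambda$ pointwise with $\lambda\in\LP$, then \Cref{lemma:EePevar} and monotonicity of the integral give $\E_P[E]\le\E_P[\e_\lambda]\le 1$ for every $P\in\HhP$. For the \emph{only if} direction, I would first dispose of the degenerate case $\HhP=\emptyset$. There $\XP=\emptyset$, so $\sP\equiv-\infty$, $\LP=\R^m$, and $\e_\lambda\equiv+\infty$ dominates everything. So assume $\HhP\neq\emptyset$ and fix an e-variable $E$.

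Two preliminary reductions come first.
\begin{enumerate}
\item $E$ is finite on $\XP$. If $x\in\XP$, some $P\in\HhP$ has $P(\{x\})>0$, and then $P(\{x\})E(x)\le\E_P[E]\le1$.
\item Validity can be tested on finitely supported measures. Any finitely supported $Q$ on $\XP$ with $\E_Q[\Phi]\in S$ lies in $\HhP$, since $\Phi$ is trivially integrable under $Q$. Hence $\E_Q[E]\le1$ for all such $Q$.
\end{enumerate}
Outside $\XP$ the target $\e_\lambda$ equals $+\infty$, so it only remains to find $\lambda$ with
$$E(x)\le 1+\lambda\cdot(\Phi(x)-y)\quad\text{for all } x\in\XP \text{ and } y\in S\cap\aff\Phi(\XP).$$
Such a $\lambda$ automatically lies in $\LP$, because $E\ge0$.

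The core step is a separation argument in $\R^{m+1}$. Let $C=\conv\{(\Phi(x),t): x\in\XP,\ t\le E(x)\}$, which is the hypograph of the concave envelope $g(y)=\sup\{\E_Q[E]: Q \text{ finitely supported on } \XP,\ \E_Q[\Phi]=y\}$. Let $D=(S\cap\aff\Phi(\XP))\times(1,\infty)$. Reduction 2 gives $g\le1$ on $S\cap\conv\Phi(\XP)$, so $C\cap D=\emptyset$. Both sets are convex and nonempty, and both sit in the affine space $\aff\Phi(\XP)\times\R$. Proper separation of convex sets there yields $(\lambda',\beta)\neq0$ and $c$ with $\lambda'\cdot y+\beta t\le c\le\lambda'\cdot y'+\beta t'$ on $C$ and $D$ respectively. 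Letting $t'\to\infty$ in $D$ forces $\beta\ge0$. If $\beta>0$, normalise $\beta=1$ and set $\lambda=-\lambda'$. Pushing $t'\downarrow 1$ in $D$ and taking points $(\Phi(x),E(x))\in C$ then gives exactly the displayed inequality.

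The main obstacle is ruling out a vertical hyperplane ($\beta=0$). Such a hyperplane would properly separate $\conv\Phi(\XP)$ from $S\cap\aff\Phi(\XP)$ inside $\aff\Phi(\XP)$. That situation can occur only when $S$ meets $\conv\Phi(\XP)$ solely on its relative boundary. I would handle this via faces.
\begin{enumerate}
\item Pick $y_0\in\relint(S\cap\conv\Phi(\XP))$ and let $F$ be the minimal face of $\conv\Phi(\XP)$ containing $S\cap\conv\Phi(\XP)$.
\item Show $\Phi(\XP)\subseteq F$. Take $x\in\XP$ and $P\in\HhP$ charging $x$. Then $\E_P[\Phi]\in S\cap\conv\Phi(\XP)\subseteq F$ (using \Cref{lemma:convexhull}), and the face property forces $\Phi(X)\in F$ $P$-a.s., so in particular $\Phi(x)\in F$. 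Making this precise may require a measure-theoretic face lemma from \Cref{app:convex}.
\item Conclude that $\aff\Phi(\XP)=\aff F$ and that $S\cap\aff\Phi(\XP)$ meets $\relint\conv\Phi(\XP)$, the relative interior of $F$.
\end{enumerate}
The last point means $y_0$ sits in $\relint\conv\Phi(\XP)$. A vertical separating functional $\lambda'$ would then be constant on $\conv\Phi(\XP)$, hence on its affine hull, which contradicts properness. So $\beta>0$.
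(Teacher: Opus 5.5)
Your proof is correct and follows essentially the paper's route. It is the separation argument from the proof of \Cref{lemma:dom}, specialised to $\tau=0$: you separate the convex hull of the hypograph from $(S\cap\aff\Phi(\XP))\times(1,\infty)$ and exclude the vertical case using a point of $S$ in $\relint\conv\Phi(\XP)$. Your face step reproduces \Cref{lemma:exF}, applied directly on $\XP$ rather than through the paper's reduction to $\X=\XP$ and $S'=\relint(S\cap A)$. Two small remarks: you only need $C$ to be contained in the hypograph of $g$, not equal to it, and the step $\Phi(x)\in F$ needs no measure-theoretic face lemma, since writing $P=\alpha\delta_x+(1-\alpha)Q'$ with $\alpha=P(\{x\})$ and applying the face property to this two-point combination suffices, exactly as in \Cref{lemma:exF}.
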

We do not give a full proof of \Cref{prop:evar} here, since it follows from the main sequential result, \Cref{thm:eprocess_maj}. Indeed, any e-variable $E$ for $\HhP$ can be embedded into the sequential setting by defining $\tilde E_0=1$ and $\tilde E_t(x^t)=E(x_1)$ for $t\geq 1$. Then $\tilde E$ is an e-process for $\HhP^\infty$, and applying \Cref{thm:eprocess_maj} with constant stopping time $\tau=1$ yields the desired domination. More explicitly, the relevant argument is the one used in the proof of \Cref{lemma:dom}: a separation argument gives an affine one-step e-variable that dominates the continuation value, and taking the continuation stopping time to be $\tau=0$ gives the present single-step statement. The following informal argument illustrates the separation idea behind this result in the special case of a mean-zero constraint for distributions on the unit ball. The proof for the general case follows similar principles (see proof of \Cref{lemma:dom} in \Cref{app:proof_dom}).
\begin{figure}[t]
\centering
\includegraphics[width=.85\textwidth]{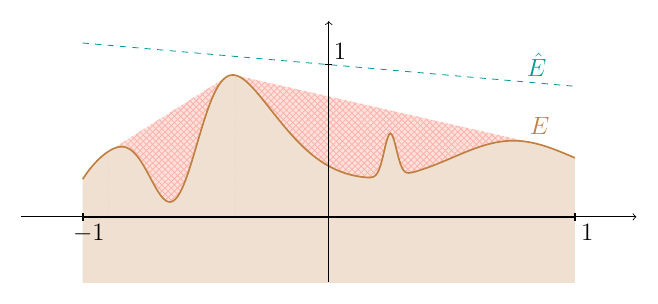}
\caption{Geometric intuition for the proof sketch. The brown solid curve represents an e-variable $E$, and the brown shaded region is its hypograph. Validity under all mean-zero distributions prevents the convex hull of this hypograph (brown region and pink hatched  region) from intersecting the vertical axis above $1$. A separating hyperplane then yields an affine e-variable $\hat E\in\EeP$, shown as the dashed line, that dominates $E$.}
\label{fig:proof}
\end{figure}
\begin{proof}[Proof sketch in a simplified setting]
    Let us consider the particular case in which $\X =B$ (the closed unit ball in $\R^m$), $\Phi$ is the identity, and $S=\{0\}$. Then the null $\HhP$ consists of all distributions on $\X$ with mean zero, and $\XP=\X$. In this case the affine e-variables take the form $\e_\lambda:x\mapsto 1+\lambda\cdot x$, with $\lambda\in\LP=B$.
    
    Let $E$ be an e-variable for $\HhP$. The condition $\E_P[E]\leq1$ for every null distribution says, in particular, that one cannot take finitely many points $x_1,\dots,x_n$, average them with non-negative weights $\alpha_1,\dots,\alpha_n$ (summing to one) so that $\sum_{i=1}^n \alpha_i x_i=0$, and at the same time have $\sum_{i=1}^n \alpha_i E(x_i)>1$. Indeed, such a choice would define a distribution in $\HhP$ under which the expectation of $E$ is larger than one.
    
    Geometrically, consider the points $(x,E(x))$ in $\R^m\times\R$, together with everything below them. The preceding observation says that the convex hull $C$ of this lower set cannot intersect the vertical half-line above the point $(0,1)$. Standard convex analytic results ensure that there must exist a hyperplane separating $C$ from $(0,1)$. (See \Cref{fig:proof} for a one-dimensional pictorial representation.) Algebraically, this means that there is a vector $\lambda\in\R^m$ such that $E(x)\leq \e_\lambda(x) = 1+\lambda\cdot x$ for all $x\in B$. Since $E$ is non-negative, this also means that $\e_\lambda\geq0$ for all $x\in B$, which implies $\lambda\in B$. For any $P\in\HhP$, $\E_P[\e_\lambda] = 1 + \lambda\cdot\E_P[X]=1$, so the right-hand side is itself an e-variable. Thus, in this simple case, every e-variable is majorised by an affine one.

    On the other hand, if a non-negative Borel function $E:B\to[0,\infty]$ is majorised by $\e_\lambda$, with $\lambda\in B$, then $\E_P[E]\leq\E_P[\e_\lambda]\leq 1$, so $E$ is an e-variable. We conclude that a non-negative Borel function is an e-variable if and only if it is dominated by an element of $\EeP$.
\end{proof}

\section{E-process characterisation}

We now pass from the single-step setting to the sequential one, where we want to characterise the set of e-processes. We will see that every e-process is dominated by a supermartingale in the form of a product of affine e-variables, where at each time the affine coefficient can be chosen based on the  past observations. This gives the natural sequential analogue of the class $\EeP$.

Let $\LP^\infty$ denote the set of sequences $\lambda^\infty=(\lambda_t)_{t\geq1}$ such that, for each $t\geq1$, the map
$$\lambda_t:\XP^{t-1}\to\LP$$
is Borel. For $\lambda^\infty\in\LP^\infty$, define the non-negative process $E^{\lambda^\infty}=(E_t^{\lambda^\infty})_{t\geq0}$ recursively by setting $E_0^{\lambda^\infty}=1$ and, for $t\geq1$,
$$E_t^{\lambda^\infty}(x^t)=\begin{cases}E_{t-1}^{\lambda^\infty}(x^{t-1})\e_{\lambda_t(x^{t-1})}(x_t) & \text{if $x^{t-1}\in\XP^{t-1}$;}\\+\infty & \text{otherwise.}\end{cases}$$
In the above product, we adopt the \emph{non-standard convention} $0\cdot\infty=\infty$. This  is irrelevant under the null, as paths remain in $\XP$ almost surely, but it ensures that domination statements also hold outside of the effective domain.

We define the set  $\EeP^\infty=\{E^{\lambda^\infty}:\lambda^\infty\in\LP^\infty\}$, the class of predictable products of affine one-step e-variables. 

\begin{lemma}\label{lemma:EePinfeproc}
    The elements of $\EeP^\infty$ are e-processes for $\HhP^\infty$.
\end{lemma}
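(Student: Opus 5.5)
The plan is to show that every $E^{\lambda^\infty}\in\EeP^\infty$ is a non-negative supermartingale under each $P\in\HhP^\infty$, with $E_0=1$. Optional stopping for bounded stopping times then gives $\E_P[E_\tau]\le 1$ for every $\tau\in\T_\star$.

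\emph{Measurability and non-negativity.} I would argue by induction on $t$ that each $E_t^{\lambda^\infty}$ is a Borel function $\X^t\to[0,\infty]$. The set $\XP^{t-1}$ is Borel by \Cref{lemma:XP}. The map $(x^{t-1},x_t)\mapsto \e_{\lambda_t(x^{t-1})}(x_t)$ is Borel, because it is built from the Borel map $\lambda_t$, the map $\Phi$, the indicator of $\XP$, and the function $\lambda\mapsto\sP(\lambda)$ restricted to $\LP$. The last of these needs a short check. On $\LP$, $\sP$ is finite when $\XP\neq\emptyset$, since $\sP(\lambda)\le 1+\lambda\cdot\Phi(x)$. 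It is also convex and lower semicontinuous, being a supremum of linear functions, and hence Borel. Products and the case split (with the convention $0\cdot\infty=\infty$) preserve Borel measurability. Non-negativity follows from the definition of $\LP$.

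\emph{One-step supermartingale property.} Fix $P\in\HhP^\infty$ and $t\ge1$. I first want $P(X^{t-1}\in\XP^{t-1})=1$ and a regular conditional distribution $P_{t}(\cdot\mid x^{t-1})$ of $X_t$ given $X^{t-1}$ (which exists since $\X$ is standard Borel). For $P$-a.e.\ history, $P_t(\cdot\mid x^{t-1})$ integrates $\Phi$ and satisfies $\E_{P_t(\cdot\mid x^{t-1})}[\Phi]\in S$, i.e.\ $P_t(\cdot\mid x^{t-1})\in\HhP$. The mean condition holds by the definition of conditional expectation, and integrability holds because $\E_P\|\Phi(X_t)\|_1<\infty$ forces the conditional first moment to be finite a.s. By \Cref{lemma:XP}, $P_t(\XP\mid x^{t-1})=1$ for a.e.\ history. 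Integrating over histories then gives $P(X_t\in\XP)=1$ for each $t$, so $P(X^t\in\XP^t)=1$ for all $t$. On this full-measure set, \Cref{lemma:EePevar} applied to the kernel gives
\[
\E_P\bigl[\e_{\lambda_t(X^{t-1})}(X_t)\mid X^{t-1}\bigr]\le 1\quad P\text{-a.s.},
\]
because $\lambda_t(x^{t-1})\in\LP$ is fixed once we condition on the history. Since $E_{t-1}$ is $X^{t-1}$-measurable and non-negative, and is a.s.\ finite by induction (it has finite expectation), the conditional-expectation rules for non-negative extended-valued variables give $\E_P[E_t\mid X^{t-1}]\le E_{t-1}$ a.s. The exceptional set where $0\cdot\infty$ or $x^{t-1}\notin\XP^{t-1}$ occurs is $P$-null and does not matter.

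\emph{Conclusion and main obstacle.} With the supermartingale property in hand, for $\tau\in\T_\star$ bounded by $T$ I would write
\[
E_\tau=E_0+\sum_{t=1}^T \mathbf 1\{\tau\ge t\}\,(E_t-E_{t-1}).
\]
The event $\{\tau\ge t\}$ is $X^{t-1}$-measurable. Working with the a.s.\ finite, integrable versions, this gives $\E_P[E_\tau]\le E_0=1$, the standard optional stopping theorem for bounded times. The step needing the most care is the conditional one. It requires making rigorous that the kernel $P_t(\cdot\mid x^{t-1})$ lies in $\HhP$ for a.e.\ history, and that the conditional integral of the composed map $\e_{\lambda_t(x^{t-1})}(x_t)$ equals its integral against the kernel; the latter is the disintegration identity for jointly Borel non-negative functions. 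The measurability of $\sP$ on $\LP$ is a minor but necessary point in that step.
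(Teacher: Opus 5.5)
Your proposal is correct and follows the same route as the paper's proof. It establishes Borel measurability via lower semicontinuity of $\sP$ and \Cref{lemma:XP}, obtains the one-step bound $\E_P[\e_{\lambda_t(X^{t-1})}(X_t)\mid X^{t-1}]\le 1$ by applying \Cref{lemma:EePevar} to the conditional law of $X_t$, and concludes by optional stopping for bounded stopping times; you just make explicit details the paper leaves implicit or cites, namely the regular conditional distributions, finiteness of $\sP$ on $\LP$, and the telescoping form of optional stopping.
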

\begin{proof}
    Fix $\lambda^\infty\in\LP^\infty$ and let $E^{\lambda^\infty}$ be the corresponding process. The process is non-negative by construction. Moreover, $\sP$ is lower semi-continuous as a supremum of affine functions, and in particular it is Borel. This, with  \Cref{lemma:XP} and the Borel measurability of the maps $\lambda_t$, ensures that the process $E^{\lambda^\infty}$ is Borel.

    Let $P\in\HhP^\infty$. Since $P(\XP^\infty)=1$, the values assigned outside $\XP^\infty$ are irrelevant under $P$. Fix $t\geq1$. Conditionally on $X^{t-1}$, the law of $X_t$ belongs to $\HhP$ for $P$-almost every history. By \Cref{lemma:EePevar} $\E_P\left[\e_{\lambda_t(X^{t-1})}(X_t)\mid X^{t-1}\right]\leq1$ $P$-almost surely, and so $\E_P[E_t^{\lambda^\infty}\mid X^{t-1}]\leq E_{t-1}^{\lambda^\infty}$, $P$-almost surely. We conclude that $E^{\lambda^\infty}$ is a non-negative supermartingale with initial value one. By Doob's optional stopping theorem for bounded stopping times (see, e.g., Theorem 10.4.1 in \citealt{dudley2002real}), for every $\tau\in\T_\star$ we have $\E_P[E_\tau^{\lambda^\infty}]\leq1$. Since $P\in\HhP^\infty$ was arbitrary, $E^{\lambda^\infty}$ is an e-process for $\HhP^\infty$.
\end{proof}

The main result of this paper is the following domination statement: every e-process for the conditional constrained hypothesis is dominated, path by path and time by time, by one of these predictable affine products. This yields a full characterisation of the e-processes for $\HhP^\infty$. 

\begin{theorem}\label{thm:eprocess_maj}
    A non-negative process $E = (E_t)_{t\geq 0}$ is an e-process for $\HhP^\infty$ if and only if there is $\hat E = (\hat E_t)_{t\geq 0}\in\EeP^\infty$, such that $E_t(x^t)\leq \hat E_t(x^t)$ for every $t\geq 0$ and  $x^t\in\X^t$. 
\end{theorem}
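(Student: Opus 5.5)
The ``if'' direction is immediate from \Cref{lemma:EePinfeproc}. If $E\leq\hat E$ pointwise with $\hat E\in\EeP^\infty$, then $E_0\leq\hat E_0=1$. Moreover, for every $\tau\in\T_\star$ and $P\in\HhP^\infty$ we have $\E_P[E_\tau]\leq\E_P[\hat E_\tau]\leq 1$. All the work is in the ``only if'' direction.

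\textbf{Step 1: the value process.} Given an e-process $E$, I will introduce the continuation value. For a history $x^t\in\XP^t$, let
$$V_t(x^t)=\sup_{\tau,\,P}\E_P\bigl[E_{t+\tau}(x^t,X^{\tau})\bigr],$$
where $P$ ranges over laws of the shifted future $(X_{t+1},X_{t+2},\dots)$ in $\HhP^\infty$, and $\tau$ ranges over bounded stopping times of the shifted filtration. For $x^t\notin\XP^t$ set $V_t(x^t)=+\infty$. Taking $\tau=0$ gives $V_t\geq E_t$. I will then show two properties.
\begin{itemize}
\item[(a)] $V_0\leq 1$. Concatenating the Dirac at the empty history with any admissible $(\tau,P)$ gives a legitimate pair for the original e-process.
\item[(b)] The one-step dynamic programming inequality. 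For every $x^{t}\in\XP^t$ and every $Q\in\HhP$,
$$\E_{Q}\bigl[V_{t+1}(x^t,X)\bigr]\leq V_t(x^t).$$
\end{itemize}
Property (b) should follow by pasting: choose near-optimal $(\tau_x,P_x)$ for each next observation $x$, and form the measure $Q\otimes P_x$ together with the stopping time that stops at $1+\tau_x$. This pasting is only legitimate if the near-optimal choices can be made measurably in $x$. To handle that, I will first reduce to countably supported $Q$, which is enough for the separation argument below, or else use analytic measurable selection.

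Measurability of $V_t$ itself is delicate, since a supremum over uncountably many $P$ need not be Borel. I expect $V_t$ to be only upper semianalytic. I will therefore work with the universal completion, and at the end replace objects by Borel versions, using the fact that the dominating process we build is defined through $\lambda_t$ and not through $V$.

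\textbf{Step 2: one-step affine domination (\Cref{lemma:dom}).} Fix $x^{t}$ with $v=V_t(x^t)<\infty$. Applying (b) to finitely supported $Q\in\HhP$, I will argue as in the sketch for \Cref{prop:evar}. Consider the hypograph of $x\mapsto V_{t+1}(x^t,x)/v$ over $\XP$, lifted through $\Phi$. Its convex hull does not meet the ray above $\{(y,1):y\in S\cap\aff\Phi(\XP)\}$. A separation in the affine hull, using relative interiors (\Cref{app:convex}), then gives $\lambda\in\LP$ such that
$$V_{t+1}(x^t,x)\leq v\,\e_\lambda(x)\quad\text{for all }x\in\X.$$
Care is needed when $S$ touches the relative boundary; that is where $\sP$ and the intersection with $\aff\Phi(\XP)$ enter. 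When $v=0$, the same argument shows $V_{t+1}(x^t,\cdot)=0$ on $\XP$, so any $\lambda$ works. This is consistent with the product convention.

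\textbf{Step 3: predictable selection and induction.} This is the main obstacle: the $\lambda$ found in Step 2 must be chosen as a Borel function $\lambda_{t+1}(x^t)$. I will define the set
$$\{(x^t,\lambda):\lambda\in\LP,\ V_{t+1}(x^t,x)\leq V_t(x^t)\,\e_\lambda(x)\ \forall x\}.$$
Its sections are non-empty and closed and convex, and the set is co-analytic or Borel in the product space. I will then invoke the Borel-selection result of \citet{clerico2026borel} to obtain a Borel selector on $\XP^{t}$. If $V$ is only universally measurable, I will first pass to a Borel upper envelope; alternatively, I will run the selection theorem on a Borel majorant of $V_{t+1}$ that still satisfies (b).

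With the selectors in hand, induction gives
$$E_t\leq V_t\leq V_0\prod_{s\leq t}\e_{\lambda_s}\leq E^{\lambda^\infty}_t$$
on $\XP^t$. Off $\XP^t$ the bound is trivial, because $E^{\lambda^\infty}_t=+\infty$ there by the convention $0\cdot\infty=\infty$. This completes the ``only if'' direction.
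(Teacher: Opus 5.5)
\textbf{Where the proposal breaks.} Your architecture matches the paper's: continuation values, the separation argument of \Cref{lemma:dom} at a fixed history, predictable selection, and $+\infty$ off $\XP$. Steps 1--2 are fine once you restrict to $\X=\XP$ and to finitely supported $Q$. The gap is Step 3. You select $\lambda$ from the constraint $V_{t+1}(x^t,x)\le V_t(x^t)\,\e_\lambda(x)$, which has the non-Borel function $V_t$ on the right-hand side.

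Even granting that $V_t$ and $V_{t+1}$ are upper semi-analytic, the complement of your set is the projection of $\bigcup_{q\in\mathbb Q}\{V_{t+1}(x^t,x)>q\}\cap\{V_t(x^t)\,\e_\lambda(x)<q\}$. That is a countable union of intersections of analytic and co-analytic sets, so nothing places your set among the Borel or co-analytic ones.

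Nor does \Cref{lemma:gem} apply. It is not a selection theorem for sets with closed convex sections. It needs an inequality $f(a,b)\le\gamma(a)\cdot\phi(b)$ with $f$ upper semi-analytic, and casting your constraint in that form forces $f=V_{t+1}-V_t$ or $f=V_{t+1}/V_t-1$, neither of which is.

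Your fallbacks do not close this:
\begin{itemize}
\item A universally measurable selector is not an element of $\LP^\infty$. Altering it on a null set destroys domination, which must hold on every path, and the null is not dominated by a single measure.
\item A Borel majorant of $V_{t+1}$ that still satisfies (b) is essentially the object you are trying to build.
\item Even your case split is not Borel, since $\{V_t=0\}$ is only co-analytic.
\end{itemize}

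\textbf{The missing idea.} Normalise by the already constructed Borel product instead of by $V_t$, i.e.\ take as induction hypothesis $V_t\le E^{\lambda^t}_t$ (the paper's domination at level $t$). Then:
\begin{itemize}
\item On the Borel set $\{E^{\lambda^t}_t>0\}$, the rescaled process $E^{x^t}/E^{\lambda^t}_t(x^t)$ is an e-process. Its one-step continuation envelope $u(x^t,x)$ is the envelope of a Borel process.
\item \Cref{lemma:dom} gives pointwise affine dominators for $u$.
\item \Cref{lemma:gem}, with $f=u-1$, $\B=\X\times(S\cap\aff\Phi(\X))$ and $\phi(x,z)=\Phi(x)-z$, then yields a Borel $\lambda_{t+1}$.
\item On the Borel complement one sets $\lambda_{t+1}=0$ and checks, by a pasting argument, that all continuation values vanish.
\end{itemize}

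\textbf{Upper semi-analyticity and Borel $S$.} Even with this fix, upper semi-analyticity of $u$ has to be proved. Your ``I expect'' skips the hardest technical part: the supremum runs over $\HhP^\infty\times\T_\star$, which has no convenient standard Borel structure. The paper shows, via \Cref{lemma:caradown}, that the supremum is unchanged when each conditional law has at most $\min(m+1,|\XP|)$ atoms. Stopping times bounded by $T$ then reduce to finitely many masks on a finite tree with a Borel parameter set, and $u$ becomes a countable supremum of finite maxima of Borel suprema. This requires $S$ to be Borel. The paper arranges that beforehand by replacing $S$ with $\relint(S\cap\aff\Phi(\XP))$ (\Cref{lemma:ropen}), and your proposal does not address non-Borel $S$ at all.
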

\begin{proof}
    The ``if'' direction is immediate. If $E$ is majorised by some $\hat E\in\EeP^\infty$, then for every $P\in\HhP^\infty$ and every $\tau\in\T_\star$, $\E_P[E_\tau]\leq \E_P[\hat E_\tau]\leq 1$ by \Cref{lemma:EePinfeproc}. Hence $E$ is an e-process.

    To prove the converse, we first introduce the following notation and terminology. Let $E$ be an e-process for $\HhP^\infty$, for $t\geq 1$ and $x^t\in\X^t$, we define the process $E^{x^t} = (E_s^{x^t})_{s\geq 0}$ by setting $E_0^{x^t}=E_t(x^t)$ and $E_s^{x^t}(y^s) = E_{t+s}(x^t, y^s)$ for $s\geq 1$ and $y^s\in\X^s$. For $T\geq 1$, we say that $\lambda^T\in\LP^T$ dominates $E$ at level $T$ if, for all $\tau\in\T_\star$, all $Q\in\HhP^\infty$, all finite $t\in[1:T]$ and all $x^t\in\X^t$, 
    \begin{equation}\label{eq:defdom}\E_Q[E_\tau^{x^t}]\leq E^{\lambda^T}_t(x^t)\,.\end{equation} where $E^{\lambda^T}(x^t) = \prod_{s=1}^t \e_{\lambda_s(x^{s-1})}(x_s)$,  with the convention $0\cdot\infty = \infty$.

    If $\HhP$ is empty, then $\XP=\emptyset$ and the result is trivial. So, we will henceforth assume that $\HhP\neq\emptyset$. First, we prove the claim under the additional assumptions that $\X = \XP$ and $S$ is Borel (note that, in general, a convex set in $\R^m$ need not be Borel). Let $E$ be an e-process for $\HhP^\infty$.  We will construct inductively a sequence $\lambda^\infty = (\lambda_t)_{t\geq 1}\in\LP^\infty$ such that, for every $T\geq 1$, the truncation $\lambda^T = (\lambda_1,\dots, \lambda_T)$ dominates $E$ at level $T$. 

    The initial step of the induction is the following one-step domination lemma. We defer to \Cref{app:proof_dom} the proof, whose  argument is the rigorous version of the separation idea illustrated in the proof sketch of \Cref{prop:evar} in the previous section.
    \begin{lemma}\label{lemma:dom}
        Assume $\HhP\neq\emptyset$ and $\X=\XP$. Let $E$ be an e-process for $\HhP^\infty$. Then there exists $\lambda\in\LP$ that dominates $E$ at level $1$.
    \end{lemma}
    
    Now, fix $T\geq2$, and assume that $\lambda^{T-1}=(\lambda_1,\dots,\lambda_{T-1})$ dominates $E$ at level $T-1$. We will construct a Borel map $\lambda_T:\X^{T-1}\to\LP$ such that $\lambda^T=(\lambda^{T-1},\lambda_T)$ dominates $E$ at level $T$.

    Let $$B_{T-1} = \{x^{T-1}\in\X^{T-1}\,:\,E_{T-1}^{\lambda^{T-1}}(x^{T-1})>0\}\,.$$
    Since $E^{\lambda^{T-1}}_{T-1}$ is Borel, $B_{T-1}$ is a Borel subset of $\X^{T-1}$. Fix $\bar x^{T-1}\in B_{T-1}$. We define a non-negative process $\tilde E^{\bar x^{T-1}} = (\tilde E_t^{\bar x^{T-1}})_{t\geq 0}$ by setting $$\tilde E^{\bar x^{T-1}}_t = \frac{E^{\bar x^{T-1}}_t}{E^{\lambda^{T-1}}_{T-1}(\bar x^{T-1})}$$
    for every $t\geq 0$. Since $\lambda^{T-1}$ dominates $E$ at level $T-1$, for every $Q\in\HhP^\infty$ and every $\tau\in\T_\star$, we have 
    $$\E_Q[\tilde E_\tau^{\bar x^{T-1}}] = \frac{\E_Q[E_\tau^{\bar x^{T-1}}]}{E_{T-1}^{\lambda^{T-1}}(\bar x^{T-1})}\leq 1$$ by \eqref{eq:defdom}.
    Thus, $\tilde E^{\bar x^{T-1}}$ is an e-process for $\HhP^\infty$. 
    
    Define the mapping $u:B_{T-1}\times\X\to[0,\infty]$ as
    $$u(x^{T-1},x) = \sup_{Q\in\HhP^\infty}\sup_{\tau\in\T_\star}\E_Q\big[(\tilde E^{x^{T-1}})_\tau^{x}]\,.$$
    For any $\bar x^{T-1}\in B_{T-1}$, since $\tilde E^{\bar x^{T-1}}$ is an e-process for $\HhP^\infty$, \Cref{lemma:dom} yields some $l_{\bar x^{T-1}}\in\LP$ such that \begin{equation}\label{eq:boundu}u(\bar x^{T-1}, x) \leq 1 + l_{\bar x^{T-1}}\cdot\Phi(x) - \sP(l_{\bar x^{T-1}})\,,\end{equation} for every $x\in\X$. In particular, $u$ is finite valued. 

    One might be tempted to define $\lambda_T:x^{T-1}\mapsto l_{x^{T-1}}$ on $B_{T-1}$ (and $0$ elsewhere) and go on with proving that $(\lambda^{T-1},\lambda_T)$ dominates $E$ at level $T$. However, things are more delicate. We have just shown that for every $x^{T-1}\in B_{T-1}$ there is some value $l_{x^{T-1}}$, but this does not imply any regularity of the mapping $x^{T-1}\mapsto l_{x^{T-1}}$, which in general is not Borel. So, we cannot use $x^{T-1}\mapsto l_{x^{T-1}}$ to define $\lambda_T$ directly. Yet, we can use the following  Borel selection result from \cite{clerico2026borel} to find a Borel map that satisfies the same inequality as in \eqref{eq:boundu}. 
    \begin{lemma}[Corollary 3 in \citealt{clerico2026borel}]\label{lemma:gem}
        Let $\A$ and $\B$ be standard Borel spaces. Let $f:\A\times\B\to\R$ be upper semi-analytic and $\phi:\B\to\R^m$ Borel. Assume that there exists a map $\gamma:\A\to\R^m$ such that for all $a\in\A$ and $b\in\B$, $f(a,b)\leq \gamma(a)\cdot\phi(b)$. Then, there exists a Borel map $\Gamma:\A\to\R^m$, satisfying $f(a,b)\leq \Gamma(a)\cdot\phi(b)$, for all $a\in\A$ and $b\in\B$. 
    \end{lemma}
    To show how the above lemma applies to our case, let $\A = B_{T-1}$ and $\B = \X\times (S\cap \aff\Phi(\X))$. Both $\A$ and $\B$ are standard Borel spaces (where we use that we are assuming that $S$ is Borel). Define $\tilde u:\A\times\B\to\R$ as $\tilde u(x^{T-1}, (x,z)) = u(x^{T-1},x)-1$ and $\phi:\B\to\R^m$ as $\phi(x,z) = \Phi(x)-z$. Let $\gamma:\A\to\R^m$ be given by $\gamma(x^{T-1}) = l_{x^{T-1}}$. \eqref{eq:boundu} now reads
    $$\tilde u(x^{T-1}, (x,z))\leq \gamma(x^{T-1})\cdot\phi(x,z)\,,$$ for all $x^{T-1}\in B_{T-1}$, $x\in\X$, and $z\in S\cap\aff\Phi(\X)$, where we used that $\sP(\lambda) = \sup_{z\in S\cap\aff\Phi(\X)}\lambda\cdot z$ for every $\lambda\in\R^m$, since $\X = \XP$. $\phi$ is a Borel map, so in order to use \Cref{lemma:gem} we only need to ensure that $\tilde u$ is ``regular enough'' (specifically upper semi-analytic, see below after the end of this proof, and \Cref{app:coarse-semian}, for a definition). This is a non-trivial technical point, it follows from \Cref{lemma:continuation-envelope-usa} in \Cref{app:coarse}  that this is the case. So, \Cref{lemma:gem} applies and implies that there exists a Borel map $\lambda_T:B_{T-1}\to\R^m$ such that
    $$u(x^{T-1}, x)\leq 1 + \lambda_T(x^{T-1})\cdot\Phi(x) - \sP(\lambda_T(x^{T-1}))$$
    for every $x^{T-1}\in B_{T-1}$ and $x\in\X$. Since $u$ is non-negative and $\X=\XP$, we conclude that $\lambda_T(x^{T-1})\in\LP$ for every $x^{T-1}\in B_{T-1}$. We can also extend $\lambda_T$ to the whole $\X^{T-1}$ by setting $\lambda_T(x^{T-1}) = 0$ outside of $B_{T-1}$. Since $0\in\LP$, this ensures that we have a Borel map $\lambda_T:\X^{T-1}\to\LP$. 

    We now show that $\lambda^T = (\lambda^{T-1},\lambda_T)$ dominates $E$ at level $T$. First, fix $\bar x^T = (\bar x^{T-1},\bar x_T)\in B_{T-1}\times\X$, $Q\in\HhP^\infty$, and $\tau\in\T_\star$. Then, 
    $$\E_Q[E^{\bar x^T}_\tau] \leq E_{T-1}^{\lambda^{T-1}}(\bar x^{T-1})\, u(\bar x^{T-1},\bar x_T)\leq E_T^{\lambda^T}(\bar x^T)\,.$$
    On the other hand, fix $\bar x^{T} = (\bar x^{T-1}, \bar x_T) \in\X^T$, and assume that $\bar x^{T-1}\notin B_{T-1}$. Then, $E^{\lambda^{T-1}}_{T-1}(\bar x^{T-1})=0$ by definition of $B_{T-1}$. Fix  $Q\in\HhP^\infty$, and $\tau\in\T_\star$. Since $\X=\XP$,  there is $P\in\HhP$ with $P(\{\bar x_T\})=\varepsilon>0$. Let $Q_\star = P\otimes Q$. Then, $Q_\star\in\HhP^\infty$. Define the stopping time $\tau_\star$ by $\tau_\star(x^\infty) = 1 + \tau(x^{2:\infty})$ if $x_1 = \bar x_T$ and $1$ otherwise. Because $\lambda^{T-1}$ dominates $E$ at level $T-1$, we have
    $$0 = E_{T-1}^{\lambda^{T-1}}(\bar x^{T-1}) \geq \E_{Q_\star}[E^{\bar x^{T-1}}_{\tau_\star}] \geq \varepsilon \E_Q[E^{\bar x^T}_\tau]\,.$$
    So, $\E_Q[E^{\bar x^T}_\tau] = 0$, and as $E_T^{\lambda^T}(\bar x^T) = 0$ we still have $\E_Q[E_\tau^{\bar x^T}]\leq E_T^{\lambda^T}(\bar x^T)$. Since for any $t<T$, $E^{\lambda^T}_t = E^{\lambda^{T-1}}_t$, we can therefore conclude that $\lambda^T$ dominates $E$ at level $T$. 

    So far, we have shown that we can construct a sequence $\lambda^\infty\in\LP^\infty$, such that, for every $T\geq 1$, the truncation $\lambda^T$ dominates $E$ at level $T$. Now,  fix $T\geq 1$ and $x^T\in\X^T$. Taking $\tau = 0$ in the definition of domination at level $T$, we get
    $$E_T(x^T) \leq E_T^{\lambda^T}(x^T) = E_T^{\lambda^\infty}(x^T)\,.$$
    Since $T\geq1$ and $x^T$ are arbitrary (and we also have $E_0 \leq 1 = E^{\lambda^\infty}_0$), $E^{\lambda^\infty}$ majorises $E$. This concludes the proof under the additional assumptions that  $\X = \XP$ and $S$ is Borel.

    We now discuss the general case. Let $A=\aff\Phi(\XP)$ and $S'=\relint(S\cap A)$. $S'$ is convex and Borel as a relatively open convex subset of the affine space $A$. Moreover, \Cref{lemma:ropen} in \Cref{app:tech} ensures that $S'\neq\emptyset$, $\Hh_{\Phi,S'}\neq\emptyset$, $\XP = \X_{\Phi,S'}$, $\sP = \sigma_{\Phi,S'}$, and $\LP = \Lambda_{\Phi,S'}$. Let $E$ be an e-process for $\HhP^\infty$. Since $\Hh_{\Phi,S'}^\infty\subseteq\HhP^\infty$, the process $E$ is also an e-process for $\Hh_{\Phi,S'}^\infty$. Restricting to the space $\XP$, what established so far yields a $\lambda^\infty\in\LP^\infty$ such that the corresponding product process $E^{\lambda^\infty}$ dominates $E$ on $\XP^\infty$. It remains to check that $E^{\lambda^\infty}$ majorises $E$ on all of $\X^\infty$. If a path $x^\infty$ ever leaves $\XP$, then the first one-step factor evaluated outside $\XP$ is $+\infty$. With the convention $0\times\infty=\infty$ that we adopt, it follows that $E_t^{\lambda^\infty}(x^t)=+\infty$ from that time onward. Hence the majorisation holds also outside $\XP^\infty$. 
    \end{proof}

    \paragraph{Comments on the omitted measure-theoretic details.} As we mentioned in the proof, a crucial  delicate technical step is to find a Borel map $\lambda_T$, once it is known that for every history $x^{T-1}$ there exists some value $l_{x^{T-1}}$. The key tool that we use is Corollary 3 of \cite{clerico2026borel}. To apply it in our setting, we need the continuation envelope
    $$u(x^T)=\sup_{Q\in\HhP^\infty}\sup_{\tau\in\T_\star}\E_Q\big[(\tilde E^{x^{T-1}})^{x_T}_\tau\big]$$
    to be upper semi-analytic. If $\A$ and $\B$ are standard Borel spaces, a function $f:\A\to[-\infty,\infty]$ is upper semi-analytic if there is a Borel function $g:\A\times\B\to[-\infty,\infty]$ such that, for every $a\in\A$,
    $$f(a)=\sup_{b\in\B}g(a,b)\,.$$
    Although $u$ is already written as a supremum, the parameter space $\HhP^\infty\times\T_\star$ is not convenient, particularly because the class of bounded stopping times does not come with a simple suitable standard Borel structure. We therefore take an alternative route to show that $u$ is actually upper semi-analytic. In \Cref{app:coarse}, we introduce a coarsened version $\bHhP^\infty$ of the sequential hypothesis $\HhP^\infty$, obtained by restricting attention to laws whose conditional support has cardinality at most $d=\min(m+1,|\XP|)$ at each step. \Cref{cor:supsup} in \Cref{app:coarse} shows that this restriction does not change the supremum defining $u$. This makes the measurability problem much simpler. Under the coarsened hypothesis, bounded stopping times can be represented by stopping masks on finite trees. For each fixed horizon there are only finitely many such masks, so the supremum over stopping times bounded by that horizon is a finite maximum. Taking the supremum over all bounded stopping times then amounts to a countable supremum over the horizon. Since the remaining tree parameters form a standard Borel space, the resulting envelope $u$ is upper semi-analytic. 

\section{Discussion}

We have shown that, for conditionally finitely constrained hypotheses, every e-process is majorised by a supermartingale in the form of a predictable product of affine single-step e-variables. Since pointwise dominance corresponds to a more powerful  test, this is a complete-class theorem in the usual statistical sense (see, e.g., Chapter~1 in \citealt{lehmann2005testing}). In the single-step setting, analogous complete-class results for e-value testing of non-parametric hypotheses defined through linear constraints were obtained by \citet{clerico2024optimal} and \citet{larsson2026testing}. However, passing from the single-step result to the sequential one is not only a matter of iteration: even if an affine dominator exists after every fixed history, it must still be possible to choose such dominators predictably as functions of the past. \citet{larsson2026testing} explicitly raise as an open question the problem of extending their one-step characterisation to conditionally constrained hypotheses, pointing to \citet{clerico2025optimality} as the first positive result in this direction, although limited to bounded one-dimensional conditional mean testing. The present paper covers the broader case of conditional hypotheses defined via finitely many constraints. We remark that, differently from \cite{larsson2026testing}, we formulate domination pointwise, rather than only quasi-surely under the null. This makes no difference for validity,  but it does matter for testing: the realised path may come from an alternative, and in particular may leave the part of the space that is visible under the null. Of course, such a requirement would be too strong for parametric settings where domination is usually understood  up to null sets with respect to a common reference measure.

The restriction to finitely many constraints is a main limitation of the present work. It excludes, for instance, natural hypotheses such as conditional sub-Gaussianity (cf.~\citealp{larsson2026testing}). Extending the domination result of \Cref{thm:eprocess_maj}  to infinitely many constraints remains open. More broadly, it would be interesting to understand whether this is an instance of a general principle: whenever a one-step complete class is available for a null hypothesis, do predictable products of elements of that class form a complete class for the corresponding conditional null? A positive answer would give a general route from one-step e-value characterisations to sequential e-process characterisations. At present, the main obstruction seems to be measurability: fixed-history domination is not enough, since the dominating one-step object must be selected predictably as a function of the past. The Borel-selection result of \citet{clerico2026borel} provides such a choice in the finite-dimensional affine setting, and explicitly leaves open the problem of extending this type of selection theorem beyond it. One possible route would be to weaken the measurability requirements, for instance by focusing on universal measurability.

In any case, the current finite-dimensional case covered by \Cref{thm:eprocess_maj} is  enough to characterise the e-processes for arbitrary conditional hypotheses on finite sample spaces, after convexification. Indeed, if $\X=\{1,\dots,n\}$ and $\mathcal H\subseteq\Delta_n$ is any single-step null, then the conditional null generated by $\mathcal H$ has the same e-processes as the conditional null generated by $\conv\mathcal H$: on each finite tree, kernels taking values in $\conv\mathcal H$ can be expanded node by node as finite mixtures of kernels taking values in $\mathcal H$. Now take $\Phi(i)=u_i\in\R^n$, where $u_i$ is the $i$-th coordinate vector. Then, for every law $P$ on $\X$, $\E_P[\Phi]=(P(\{1\}),\dots,P(\{n\}))$. Hence, taking $S=\conv\mathcal H$, the convexified null is exactly of the form $\Hh_{\Phi,S}$. Thus, arbitrary finite-state conditional hypotheses are covered by the present theorem, although in this finite setting the result could also be proved more directly, avoiding the measurable-selection machinery used here.

A useful feature of our main result is that the complete class is explicitly parametrised by predictable choices of coefficients in the finite-dimensional set $\LP$. This kind of reduction can be valuable in practice. For example, when a simple alternative distribution $Q$ is given, one may look for a growth-rate-optimal e-variable, in the sense of \citet{grunwald2024safe}, namely an e-variable maximising $\E_Q[\log E(X)]$. By \Cref{prop:evar}, for our constrained hypotheses this optimisation can be restricted to $\EeP$. This becomes a finite-dimensional concave maximisation problem: indeed, $\lambda\mapsto \E_Q[\log\e_\lambda]$ is concave because $\sP$ is convex (as the supremum of affine functions), and it is easily checked that $\LP$ is closed and convex. On a similar note, \citet{arnold2026optimal} have used the complete-class result for coin-betting mean-testing to study GROW and REGROW optimality criteria for bounded mean testing within a tractable class of betting e-values, rather than over an arbitrary space of non-negative measurable functions. Also, the time-sensitive framework of \citet{clerico2026timesensitive} formulates anytime-valid testing as a sequential control problem over a parametrised action space of one-step e-variables. Such a formulation requires the action space and the map from actions to bets to have enough measurable structure for  dynamic-programming arguments to hold. Our result provides precisely this kind of lossless parametrisation for conditional constrained nulls: instead of optimising over all e-processes, one may restrict  to predictable  products indexed by $\LP$.

One further open direction is to characterise explicitly minimal complete classes. Here, we do not identify an optimal e-process class whose elements are all admissible. Our result shows that predictable affine products are sufficient, but not which of them are genuinely necessary. Understanding what are the maximal elements in $\Ee_{\Lambda_{\Phi,S}}^\infty$ would provide a sharper description of the betting strategies that cannot be improved pointwise.

Finally, it would also be interesting to understand possible connections with sequential merging of e-values. In that setting, \citet{vovk2021merging} show that general sequential merging procedures are dominated by martingale merging procedures. Our result has a similar flavour: for conditional constrained hypotheses, arbitrary e-processes are dominated by predictable product supermartingales. %Notably, there are also related one-step parallels with merging: admissible e-merging functions reduce to weighted averages \citep{wang2024merging}, and the geometric proof in \citet{clerico2026merging} is based on a supporting-hyperplane argument, much like the one-step separation argument used here.
\paragraph{Acknowledgments.}
I am grateful to Sebastian Arnold for insightful and extensive discussions that helped inspire this work. I acknowledge the use of generative AI models (LLMs) during the preparation of this paper, for polishing and improving the exposition, and for exploring and simplifying possible proof ideas. I revised and edited all AI-generated material, and I take full responsibility for the content and correctness of the paper.
\bibliography{bib}
\bibliographystyle{abbrvnat}
\newpage

\appendix
\crefalias{section}{appendix}
\section{Tools from convex analysis}\label{app:convex}
\subsection{Preliminaries}
A subset $C$ of $\R^m$ is \emph{convex} if, for any $x,x'\in C$ and  $\alpha\in[0,1]$, one has $\alpha x + (1-\alpha)x'\in C$. We remark that in general a convex set in $\R^m$ need not be Borel (for instance, the union of the unit open disk in $\R^2$ with a non-Borel subset of the unit circle in $\R^2$ is convex but non-Borel). A convex subset $F\subseteq C$ is a \emph{face} of the convex set $C$ if, whenever $x,x'\in C$ and $\alpha\in(0,1)$ are such that $\alpha x+(1-\alpha)x'\in F$, then $x,x'\in F$. 

Given a subset $S\subseteq\R^m$, its \emph{convex hull} $\conv S$ is the smallest convex set containing $S$. Equivalently,
$$\conv S = \left\{\sum_{i=1}^n \alpha_i x_i\,:\, n\geq 1\,,\; x_1,\dots,x_n\in S\,,\; \alpha_1,\dots,\alpha_n\geq 0\,,\; \sum_{i=1}^n \alpha_i=1\right\}\,.$$
The \emph{affine hull} of $S$, denoted by $\aff S$, is the smallest affine space containing $S$:
$$\aff S = \left\{\sum_{i=1}^n \beta_i x_i\,:\, n\geq 1\,,\; x_1,\dots,x_n\in S\,,\; \beta_1,\dots,\beta_n\in\R\,,\; \sum_{i=1}^n \beta_i=1\right\}\,.$$
Clearly, $S\subseteq\conv S\subseteq\aff S$ and $\aff(\conv S) = \aff S$. The \emph{relative interior} of a convex set $C\subseteq\R^m$, denoted by $\relint C$, is the interior of $C$ with respect to the relative topology on $\aff C$. If $C\neq\emptyset$, then $\relint C\neq\emptyset$. 

We shall use the following two classical results without proof. The first is Carathéodory's theorem (Theorem 17.1 in \citealt{rockafellar1970convex}).
\begin{theorem}[Carathéodory]\label{thm:caratheodory}
    Let $S\subseteq\R^m$. Then every point of $\conv S$ can be written as a convex combination of at most $m+1$ points of $S$, that is, as $\sum_{i=1}^k \alpha_i x_i$ for some $k\leq m+1$, $x_1,\dots,x_k\in S$, $\alpha_1,\dots,\alpha_k\geq 0$, and $\sum_{i=1}^k \alpha_i=1$.
\end{theorem}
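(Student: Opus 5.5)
We prove Carathéodory's theorem by the standard reduction argument on the number of points in a convex combination. Let $y\in\conv S$. By the explicit description of $\conv S$ recalled above, $y=\sum_{i=1}^n\alpha_i x_i$ for some $n\geq1$, points $x_i\in S$, and weights $\alpha_i\geq0$ summing to one. Among all such representations, pick one with $n$ minimal. Minimality forces every $\alpha_i$ to be strictly positive, since otherwise a point could be dropped. It then suffices to show that $n\leq m+1$.

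Suppose, for contradiction, that $n\geq m+2$. The $n-1\geq m+1$ vectors $x_2-x_1,\dots,x_n-x_1$ lie in $\R^m$, so they are linearly dependent. Hence there are reals $\beta_1,\dots,\beta_n$, not all zero, with
$$\sum_{i=1}^n\beta_i x_i=0\qquad\text{and}\qquad\sum_{i=1}^n\beta_i=0,$$
where we set $\beta_1=-\sum_{i\geq2}\beta_i$. Because the $\beta_i$ sum to zero and are not all zero, at least one of them is strictly positive.

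Now define
$$\theta=\min\{\alpha_i/\beta_i\,:\,\beta_i>0\}>0$$
and set $\alpha_i'=\alpha_i-\theta\beta_i$ for each $i$. These new coefficients have the following properties.
\begin{itemize}
\item They are all non-negative. If $\beta_i\leq0$ this is clear. If $\beta_i>0$, it holds because $\theta\leq\alpha_i/\beta_i$.
\item They sum to one, since $\sum_i\beta_i=0$.
\item They still represent $y$, since $\sum_i\alpha_i'x_i=y-\theta\sum_i\beta_ix_i=y$.
\item At least one of them vanishes, namely for an index attaining the minimum defining $\theta$.
\end{itemize}
Deleting that point gives a representation of $y$ with $n-1$ points, which contradicts the minimality of $n$. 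Therefore $n\leq m+1$, which is exactly the claim with $k=n$.

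The only step requiring care is producing the affine dependence with coefficients summing to zero. Passing to the differences $x_i-x_1$ handles this. Everything else is routine bookkeeping, so there is no real obstacle.
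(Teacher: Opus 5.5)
Your proof is correct. The paper does not prove this statement itself but cites Theorem 17.1 of Rockafellar, and your minimal-representation argument via an affine dependence $\sum_i\beta_i x_i=0$, $\sum_i\beta_i=0$ is the standard proof; it is also the same reduction step the paper carries out in its proof of \Cref{lemma:caradown}, minus the downward shift of the last coordinate used there.
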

The second is the separating hyperplane theorem (Theorem 11.3 in \citealt{rockafellar1970convex}).
\begin{theorem}[Separating hyperplane]\label{thm:proper_separation}
    Let $C_1$ and $C_2$ be non-empty convex sets in $\R^n$. If their relative interiors are disjoint, meaning $\relint C_1 \cap \relint C_2 = \emptyset$, then there exists a hyperplane that properly separates $C_1$ and $C_2$. \\
    Specifically, there exists a non-zero vector $a \in \R^n$ and a scalar $c \in \R$ such that, for every $x\in C_1$ and every $y\in C_2$,
    $$ a \cdot x \leq c \leq a \cdot y\,,$$
    and $C_1$ and $C_2$ are not both contained within the  hyperplane $\{z \in \R^n : a \cdot z = c\}$.
\end{theorem}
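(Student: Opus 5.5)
The plan is the classical reduction to separating a single convex set from the origin, followed by projection arguments and a limiting argument at the relative boundary.

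\emph{Reduction.} Set $C=C_2-C_1=\{y-x: x\in C_1,\,y\in C_2\}$. This set is non-empty and convex. We will use the standard identity $\relint(C_2-C_1)=\relint C_2-\relint C_1$, which holds because relative interiors commute with linear images (Rockafellar, Theorem~6.6) applied to $C_1\times C_2$ under $(x,y)\mapsto y-x$. With it, the hypothesis $\relint C_1\cap\relint C_2=\emptyset$ becomes $0\notin\relint C$.

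It then suffices to find $a\neq0$ such that $a\cdot z\geq0$ for all $z\in C$ and $a\cdot z>0$ for some $z\in C$. Indeed, put $c=\sup_{x\in C_1}a\cdot x$. The first property gives $c\leq\inf_{y\in C_2}a\cdot y$, so $c$ is finite and $a\cdot x\leq c\leq a\cdot y$. The second property gives $a\cdot y>a\cdot x$ for some pair, so $C_1$ and $C_2$ cannot both lie in $\{a\cdot z=c\}$.

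\emph{Case $0\notin\operatorname{cl}C$.} Let $p$ be the Euclidean projection of $0$ onto the closed convex set $\operatorname{cl}C$. The variational inequality gives $p\cdot(z-p)\geq0$ on $\operatorname{cl}C$, hence $p\cdot z\geq\|p\|^2>0$ for every $z\in C$. Take $a=p$; both properties hold.

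\emph{Case $0\in\operatorname{cl}C\setminus\relint C$.} Then $0\in\aff C$, so $\aff C=L$ is a linear subspace. Since $\relint\operatorname{cl}C=\relint C$, the origin is a relative boundary point of $\operatorname{cl}C$ inside $L$. We can therefore pick $z_k\in L\setminus\operatorname{cl}C$ with $z_k\to0$.

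For each $k$, project $z_k$ onto $\operatorname{cl}C$. This yields a unit vector $a_k\in L$ (the normalised difference lies in $L$) with $a_k\cdot z\geq a_k\cdot z_k$ for all $z\in C$. Extract a convergent subsequence $a_k\to a$. Passing to the limit gives $\|a\|=1$, $a\in L$, and $a\cdot z\geq0$ on $C$.

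If $a\cdot z=0$ for all $z\in C$, then $a$ would be orthogonal to $\operatorname{span}C=L$. This contradicts $a\in L$ and $a\neq0$. So some $z\in C$ has $a\cdot z>0$.

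\emph{Main obstacle.} The delicate step is the boundary case. Plain separation only yields a weak inequality, and a limiting normal could in principle be orthogonal to all of $C$, which would give improper separation. Choosing the approximating points $z_k$ inside $\aff C$ keeps every $a_k$, and hence the limit $a$, in the direction space $L$; this is exactly what rules out that degenerate normal. The identity $\relint\operatorname{cl}C=\relint C$ for convex sets is the other classical ingredient I would quote from Rockafellar (Theorem~6.3).
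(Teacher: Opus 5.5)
The paper gives no proof of this statement. It quotes it as a classical result (Theorem~11.3 in Rockafellar), so the natural comparison is with that source. Your argument is correct. Your first step matches Rockafellar's: pass to $C=C_2-C_1$ and use $\relint(C_2-C_1)=\relint C_2-\relint C_1$. This is his Corollary~6.6.2; your derivation via Theorem~6.6 also tacitly uses the elementary identity $\relint(C_1\times C_2)=\relint C_1\times\relint C_2$.

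After that the routes diverge:
\begin{itemize}
\item \emph{Rockafellar} appeals to his Theorem~11.2: a non-empty relatively open convex set disjoint from an affine set $M$ lies in an open half-space of some hyperplane containing $M$. That theorem is proved by enlarging $M$ until it becomes a hyperplane, a purely affine, finite-dimensional Hahn--Banach argument. Properness is then automatic, since $\relint C$ misses the hyperplane and $C\subseteq\operatorname{cl}(\relint C)$.
\item \emph{You} use the Euclidean structure instead. If $0\notin\operatorname{cl}C$, you take the metric projection. In the relative-boundary case, you take a limit of projection normals at points $z_k\in\aff C\setminus\operatorname{cl}C$. This needs $\relint\operatorname{cl}C=\relint C$ and compactness of the unit sphere. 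Properness comes from keeping every normal in $L=\aff C=\operatorname{span}C$.
\end{itemize}

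Your route is more elementary and essentially constructive. It also has a small bonus relevant here: in both cases your normal lies in $\operatorname{span}(C_2-C_1)$ (in the first case because $p\in\operatorname{cl}C$). Hence it lies in the direction space of any affine subspace containing $C_1\cup C_2$. That is exactly how the theorem is used in the proof of \Cref{lemma:dom}, where the separation is done ``in the affine subspace $\aff K$'' with $(a,b)\in L_C\times\R$. Starting from the statement as quoted, that usage needs one extra step: project the normal onto the direction space, and note that properness keeps the projection non-zero.

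Rockafellar's route, for its part, never uses the inner product. It also gives directly the slightly stronger conclusion that $\relint C$ lies in an open half-space.
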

\subsection{Technical lemmas}
The next lemma  is often stated under the additional assumption that the convex set is Borel (see, e.g., Theorem 10.2.6 in \citealt{dudley2002real}). Since we do not impose this assumption here, we include a proof for completeness.

\begin{lemma}\label{lemma:convexhull}
    Let $\X$ be a standard Borel space and $\Phi: \X \to \R^m$ be a Borel map. Denote by $\PR_\Phi$ the set of all Borel probability measures $P$ on $\X$ such that $\E_P[\|\Phi\|_1] < \infty$. Then, the set of all possible expected values of $\Phi$ is exactly the convex hull of its range:
    $$ \{ \E_P[\Phi] \,:\, P \in \PR_\Phi \} = \conv(\Phi(\X))\,. $$
\end{lemma}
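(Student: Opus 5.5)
We prove the two inclusions separately.

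\emph{Inclusion ``$\subseteq$''.} Fix $P\in\PR_\Phi$ and set $y=\E_P[\Phi]$. Suppose, for contradiction, that $y\notin C:=\conv\Phi(\X)$. We argue by induction on the dimension of the affine hull, so that no Borel assumption on $C$ is ever needed.

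First, $y\in\aff C$. Indeed, if $y\notin\aff\Phi(\X)$, there is an affine functional $h$ vanishing on $\aff\Phi(\X)$ with $h(y)\neq 0$. But $h(\Phi(x))=0$ for every $x$, so $h(y)=\E_P[h(\Phi)]=0$, a contradiction.

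Next, apply \Cref{thm:proper_separation} to $\{y\}$ and $C$, working inside $\aff C$. Since $y\notin C\supseteq\relint C$, the relative interiors are disjoint. This gives a non-zero $a$ and a constant $c$ with $a\cdot y\leq c\leq a\cdot z$ for all $z\in C$, and $\{y\}\cup C$ not contained in the hyperplane $\{a\cdot z=c\}$. The function $a\cdot\Phi(X)-c$ is non-negative, and its $P$-expectation equals $a\cdot y-c\leq 0$. Hence $a\cdot\Phi=c$ holds $P$-a.s., and then $a\cdot y=c$. Proper separation therefore forces $C\not\subseteq\{a\cdot z=c\}$.

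Let $\X'=\{x: a\cdot\Phi(x)=c\}$. This is a Borel set with $P(\X')=1$, and $\dim\aff\Phi(\X')<\dim\aff\Phi(\X)$, because $\aff\Phi(\X)$ is not contained in the hyperplane. Restricting $P$ to the standard Borel space $\X'$ and applying the induction hypothesis gives $y\in\conv\Phi(\X')\subseteq C$, a contradiction. The base case, where the affine hull has dimension $0$, is immediate: $\Phi$ is constant, so $y$ equals that constant and lies in $C$. One could equivalently phrase this as an induction on $\dim\aff\Phi(\X)$ from the outset.

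\emph{Inclusion ``$\supseteq$''.} By \Cref{thm:caratheodory} it suffices to treat finite convex combinations $\sum_i\alpha_i\Phi(x_i)$. The finitely supported measure $P=\sum_i\alpha_i\delta_{x_i}$ is Borel, because singletons are Borel in a standard Borel space. It integrates $\Phi$ and satisfies $\E_P[\Phi]=\sum_i\alpha_i\Phi(x_i)$.

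The main obstacle is the ``$\subseteq$'' direction. The standard argument runs through the closedness or Borelness of $C$, and neither is available here. The dimension-reduction via proper separation is what removes that need. The step requiring care is verifying that the separating hyperplane yields an a.s. identity and a strict decrease in dimension. This is where proper separation, rather than mere separation, is essential.
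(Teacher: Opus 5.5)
Your proof is correct and follows essentially the same route as the paper: separate $\E_P[\Phi]$ from $\conv\Phi(\X)$, use non-negativity of the separating functional to show that $P$ concentrates on the separating hyperplane, and conclude by induction on dimension. The only difference is the induction parameter. The paper inducts on the ambient dimension $m$: after translating so that $\E_P[\Phi]=0$, the hyperplane is a copy of $\R^{m-1}$, so any non-zero separator suffices and neither proper separation nor your preliminary check $y\in\aff C$ is needed; your induction on $\dim\aff\Phi(\X)$ is what makes properness necessary.
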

\begin{proof}
    Let $A_\Phi = \{ \E_P[\Phi] \,:\, P \in \PR_\Phi \}$ and let $C = \conv(\Phi(\X))$. First we show that $C\subseteq A_\Phi$. Take any $y \in C$. By definition of convex hull, there exist points $x_1,\dots,x_k\in\X$ and coefficients $\alpha_1,\dots,\alpha_k\geq 0$ with $\sum_{i=1}^k\alpha_i=1$ such that $y=\sum_{i=1}^k \alpha_i \Phi(x_i)$. Define $P=\sum_{i=1}^k \alpha_i \delta_{x_i}$. Then $P\in\PR_\Phi$ and $\E_P[\Phi]=y$. Hence $y\in A_\Phi$, and so $C\subseteq A_\Phi$.

    For the reverse inclusion, we proceed by induction on the dimension $m$. If $m=0$, then $\Phi$ is identically null, so it must be that $C=\{0\}$ and the result is trivial. If $m\geq 1$, suppose that the result holds for $m-1$. Fix $P\in\PR_\Phi$. We note that we can assume that $\E_P[\Phi]=0$ with no loss of generality (simply by replacing $\Phi$ with $\Phi-\E_P[\Phi]$). So, we need to show that $0\in C$. Assume by contradiction that $0\notin C$. Then, by the hyperplane separation theorem \Cref{thm:proper_separation} there is a non-zero $v\in\R^m$  such that $v\cdot z\leq0$, for every $z\in C$. Let us show that $P$ must be supported in $\X' = \Phi^{-1}(\Pi)$, where $\Pi$ is the hyperplane $\{z\in\R^m\,:\,v\cdot z=0 \}$. Indeed, if this was not the case, there would be a Borel $B\subseteq\X$ such that $P(B)>0$ and $v\cdot \Phi(x) \neq 0$ for all $x\in B$. Since $v\cdot \Phi(x)\leq 0$ for all $x\in\X$, this  would yield that $0=v\cdot\E_P[\Phi] < 0$, which cannot be. Now, we can apply the induction step replacing $\X$ with $\X'$, which is a Borel set, and $\Phi$ by its restriction to $\X'$, which is valued in $\Pi\cong\R^{m-1}$. Then, $0\in\conv\Phi(\X') \subseteq C$,  a contradiction.
\end{proof}

\begin{lemma}\label{lemma:face}
    Let $S\subseteq\R^n$ be a  convex set. Let $T$ be a  non-empty subset of $\R^n$, and denote by $C$ its convex hull. Assume that $C\cap S\neq \emptyset$. Then, there exists a face $F$ of $C$ such that $C\cap S\subseteq F$, $S\cap \relint F \neq\emptyset$, and  $F = \conv(T\cap \aff F)$. 
\end{lemma}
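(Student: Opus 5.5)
The plan is to argue by induction on the dimension of $C$, shrinking $C$ to a smaller face whenever $S$ misses its relative interior. If $S\cap\relint C\neq\emptyset$, take $F=C$. Then $C\cap S\subseteq F$ and $S\cap\relint F\neq\emptyset$ hold trivially. Also $F=\conv T=\conv(T\cap\aff C)$, because $T\subseteq C\subseteq\aff C$. The base case $\dim C=0$ falls into this case: $C$ is a single point, it equals its own relative interior, and $C\cap S\neq\emptyset$.

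Otherwise $S\cap\relint C=\emptyset$. Then $\relint C\cap\relint(C\cap S)=\emptyset$, since $C\cap S\subseteq S$. Apply \Cref{thm:proper_separation} to the non-empty convex sets $C$ and $C\cap S$. This gives $a\neq0$ and $c$ with $a\cdot x\leq c\leq a\cdot y$ for all $x\in C$ and $y\in C\cap S$, and the two sets are not both contained in $H=\{a\cdot z=c\}$. Since $C\cap S\subseteq C$, every $y\in C\cap S$ also satisfies $a\cdot y\leq c$, so $C\cap S\subseteq H$. Proper separation then forces $C\not\subseteq H$.

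Set $C'=C\cap H$. This is an exposed face of $C$, since $H$ supports $C$ from above. Because $C\not\subseteq H$, we have $\dim C'<\dim C$, and $C'\supseteq C\cap S\neq\emptyset$. The key identity is $C'=\conv(T\cap H)$. The inclusion $\supseteq$ is clear. For $\subseteq$, write a point of $C'$ as a convex combination $\sum\alpha_i t_i$ with $t_i\in T$ and $\alpha_i>0$. Each $a\cdot t_i\leq c$ and the average equals $c$, so every $t_i$ lies in $H$.

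We then apply the induction hypothesis with $T'=T\cap H$, whose hull is $C'$. This yields a face $F$ of $C'$ with $C'\cap S\subseteq F$, $S\cap\relint F\neq\emptyset$, and $F=\conv(T'\cap\aff F)$. Since $F\subseteq H$, we have $\aff F\subseteq H$, so $T'\cap\aff F=T\cap\aff F$. A face of a face is a face, so $F$ is a face of $C$. Finally, $C\cap S=C'\cap S\subseteq F$, because $C\cap S\subseteq H$.

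The main obstacle is justifying the use of proper separation and the claim $C\cap S\subseteq H$ without any closedness or Borel assumptions. \Cref{thm:proper_separation} needs only convexity and disjoint relative interiors, so it applies here. The remaining steps are routine bookkeeping: transitivity of faces, and the fact that the dimension strictly drops. The latter guarantees the induction terminates after at most $n$ steps.
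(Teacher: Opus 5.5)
Your proof is correct, and it takes a different route from the paper's.

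The paper argues directly. It fixes $y\in\relint(C\cap S)$ and lets $F$ be the minimal face of $C$ containing $y$. It then quotes from face theory that $y\in\relint F$, and gets the three properties from the face property alone:
\begin{itemize}
\item extending segments through $y$ inside $C\cap S$ gives $C\cap S\subseteq F$;
\item a positive convex combination of points of $T$ lying in $F$ forces those points into $F$;
\item $y\in\relint F$ gives $C\cap\aff F\subseteq F$.
\end{itemize}

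You instead descend through exposed faces. Whenever $S$ misses $\relint C$, proper separation of $C$ from $C\cap S$ via \Cref{thm:proper_separation} puts $C\cap S$ inside the separating hyperplane $H$ but not all of $C$. You then pass to $C\cap H=\conv(T\cap H)$, which has strictly smaller dimension.

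Your version buys two things. It relies only on the separation theorem already stated in the appendix, together with one-line linear-functional computations; these also handle multi-point convex combinations without reducing to the two-point face definition. It also makes explicit the supporting-hyperplane step hidden in the paper's claim that a minimal face contains $y$ in its relative interior. The cost is an induction on $\dim C$ and choices of separating hyperplanes along the way.

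The paper's proof is shorter and identifies $F$ intrinsically. In fact both constructions yield the same face. Suppose $F_1,F_2$ are faces of $C$ with $C\cap S\subseteq F_i$ and $S\cap\relint F_i\neq\emptyset$. Then $F_2$ meets $\relint F_1$, so $F_1\subseteq F_2$ by the Rockafellar Theorem 18.1 argument used in \Cref{lemma:ropen}. By symmetry $F_2\subseteq F_1$.
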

\begin{proof}
    Let $G = C\cap S$. $G$ is convex and non-empty, hence its relative interior is also non-empty and we can fix $y\in\relint G$. Let $F$ be the minimal face of $C$ containing $y$. Let us show that $G\subseteq F$. Take any $z \in G$. Because $y \in \relint G$ and $G$ is convex, there exists $w \in G$ and $\alpha \in (0, 1)$ such that $y = \alpha z + (1-\alpha)w$. Since $z, w \in G \subseteq C$, the point $y$ is a convex combination of points in $C$, with strictly positive coefficients. Since $y \in F$ and $F$ is a face of $C$, the defining property of faces implies that both $z$ and $w$ must belong to $F$.  Since $z \in G$ was arbitrary, this establishes that $G \subseteq F$.
    
    Because $F$ is minimal, we must have $y\in\relint F$ (if $y$ was on the relative boundary of $F$, then it would be contained in a smaller face). Since $y\in G\subseteq S$, and $y\in\relint F$, we immediately get that $S\cap \relint F\neq \emptyset$. 

    We are left with proving that $F=\conv(T\cap \aff F)$. Fix $x\in F$. Since $F\subseteq C$ and $C = \conv T$, $x$ can be expressed as a finite convex combination $x = \sum_{i=1}^k\lambda_i t_i$, where $t_i\in T$, $\lambda_i>0$ and $\sum_{i=1}^k\lambda_i = 1$. Again, the fact that all the $\lambda_i$ are strictly positive and $x\in F$ implies that all the $t_i$ are in $F$ (since $F$ is a face). Hence, $x$ is a convex combination of points in $T\cap F$, and so $x\in\conv(T\cap F)$. Since the choice of $x\in F$ was arbitrary, $F \subseteq \conv(T\cap F)\subseteq \conv(T\cap\aff F)$. 
    
    For the reverse inclusion, first note that $\conv(T\cap\aff F)\subseteq\conv(C\cap\aff F)$. Now, since $F$ is convex, it is enough to show that $C\cap\aff F\subseteq F$. Pick any $z\in C\cap\aff F$ and $y\in\relint F$. Then there is $u\in F$ and $\alpha\in(0,1)$ such that $y = \alpha z + (1-\alpha)u$. In particular, because $z$ and $u$ are both in $C$ and $F$ is a face, $z$ and $u$ are in $F$. As the choice of $z$ was arbitrary, we conclude.
\end{proof}
The next lemma  is a simple refining of Carathéodory's theorem under a downward-closure in the last coordinate.
\begin{lemma}\label{lemma:caradown}
    Let $A\subseteq\R^m\times\R$ be a non-empty set, and assume that whenever $(y,z)\in A$ and $z'\leq z$, then $(y,z')\in A$. Then, every point of $\conv A$ can be written as a convex combination of at most $m+1$ points of $A$.
\end{lemma}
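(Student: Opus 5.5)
The plan is to combine the classical Carathéodory theorem (\Cref{thm:caratheodory}) in $\R^{m+1}$ with one extra reduction step. The downward-closure assumption will absorb the loss in the last coordinate that this reduction causes.

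Fix $(y,z)\in\conv A$. By \Cref{thm:caratheodory} applied in $\R^{m+1}$, we can write $(y,z)=\sum_{i=1}^k\alpha_i(y_i,z_i)$, with $(y_i,z_i)\in A$, $\alpha_i>0$, $\sum_i\alpha_i=1$ and $k\leq m+2$. If $k\leq m+1$ we are done, so we assume $k=m+2$. The $m+2$ points $y_1,\dots,y_{m+2}\in\R^m$ are then affinely dependent. Hence there is a non-zero $\beta\in\R^{m+2}$ with $\sum_i\beta_i=0$ and $\sum_i\beta_iy_i=0$. Because $\sum_i\beta_i=0$ and $\beta\neq0$, $\beta$ has both strictly positive and strictly negative entries.

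We then perturb the weights to $\alpha_i(t)=\alpha_i+t\beta_i$. These still sum to one and still give $\sum_i\alpha_i(t)y_i=y$. The last coordinate becomes $z+t\sum_i\beta_iz_i$. We choose the sign of $t$ so that $t\sum_i\beta_iz_i\geq0$; if $\sum_i\beta_iz_i=0$, either sign works. Since $\beta$ has entries of both signs, moving $t$ away from $0$ in the chosen direction eventually drives some weight to zero. So we take $t^\star$ to be the first value, in that direction, at which some $\alpha_j(t^\star)=0$, while all the others remain non-negative.

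This yields a representation $(y,z')=\sum_{i\neq j}\alpha_i(t^\star)(y_i,z_i)$ with at most $m+1$ points of $A$ and with $z'\geq z$. We now pick any index $l$ with $\alpha_l(t^\star)>0$. We replace $(y_l,z_l)$ by $(y_l,\,z_l-(z'-z)/\alpha_l(t^\star))$, which lies in $A$ by downward closure. This lowers the combination exactly to $(y,z)$ and keeps at most $m+1$ points.

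The only step needing care is the sign choice. We must ensure the perturbation can be made in the direction that does not decrease the last coordinate, and still annihilates a weight. This is handled by noting that $\beta$ has entries of both signs. Everything else is routine.
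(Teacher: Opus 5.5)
Your proposal is correct and follows essentially the same route as the paper's proof. Both apply Carathéodory in $\R^{m+1}$, then use an affine dependence among the $y_i$, oriented so that the last coordinate does not decrease, to eliminate one weight. Both then restore the last coordinate exactly to $z$ by lowering one point through the downward-closure assumption.
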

\begin{proof}
    Let $(y,z)\in\conv A$. By \Cref{thm:caratheodory} in $\R^{m+1}$, we can write $(y,z)$ as a convex combination of at most $m+2$ points of $A$. If it requires $m+1$ or fewer points, we are done. 
    
    Thus, suppose $(y,z)=\sum_{i=1}^{m+2}(\alpha_i y_i,\alpha_i z_i)$ for points $(y_i,z_i)\in A$ and weights $\alpha_i>0$ summing to $1$. The $m+2$ points $y_1,\dots,y_{m+2}$ lie in $\R^m$, so they must be affinely dependent. Therefore, there exist real numbers $\gamma_1,\dots,\gamma_{m+2}$, not all zero, such that
    $$\sum_{i=1}^{m+2}\gamma_i y_i = 0 \qquad \text{and} \qquad \sum_{i=1}^{m+2}\gamma_i = 0\,.$$
    By replacing $\gamma$ with $-\gamma$ if necessary, we may assume without loss of generality that $\sum_{i=1}^{m+2}\gamma_i z_i \geq 0$. 
    
    Since $\sum_{i=1}^{m+2}\gamma_i=0$ and the $\gamma_i$ are not all zero, at least one $\gamma_i$ is strictly negative. Define
    $$t = \min_{\gamma_i < 0} \frac{\alpha_i}{-\gamma_i}\,.$$
    Because all $\alpha_i>0$, we have $t>0$. Let $j$ be an index where this minimum is attained. We define new weights $\beta_i = \alpha_i + t\gamma_i$. By our choice of $t$, we have $\beta_i \geq 0$ for all $i$, $\beta_j = 0$, and $\sum_{i=1}^{m+2}\beta_i = 1$. 
    
    Using these new weights, we have $\sum_{i=1}^{m+2}\beta_i y_i = y $. Let
    $$z^\star = \sum_{i=1}^{m+2}\beta_i z_i = z + t\sum_{i=1}^{m+2}\gamma_i z_i\,.$$
    Since $t>0$ and $\sum_{i=1}^{m+2}\gamma_i z_i \geq 0$, it follows that $z^\star \geq z$. 
    
    If $z^\star=z$, then $\sum_{i=1}^{m+2}\beta_i(y_i,z_i)$ is a convex combination of $(y,z)$ using at most $m+1$ points (since $\beta_j=0$), and we are done.
    
    If $z^\star>z$, pick any index $k$ such that $\beta_k>0$, and define $z_k' = z_k - \frac{z^\star-z}{\beta_k}$. Since $z_k'<z_k$, we have $(y_k,z_k')\in A$ by the downward closure assumption. Replacing $(y_k,z_k)$ with $(y_k,z_k')$ shifts the $z$-coordinate of the combination exactly to $z$:
    $$\sum_{i\neq k}\beta_i z_i + \beta_k z_k' = z^\star - (z^\star-z) = z\,.$$
    This yields $(y,z)$ as a convex combination of at most $m+1$ points of $A$.
\end{proof}

\section{Technicalities}\label{app:tech}

    \subsection{Effective domain}
    \begin{lemma}\label{lemma:exF}
        Let $\HhP$ be non-empty and let $C=\conv\Phi(\X)$. Then $C\cap S\neq\emptyset$ and, if $\X = \XP$, then $S \cap \relint C  \neq \emptyset$. Moreover, there is a face $F$ of $C$ such that $C\cap S\subseteq F$, $S\cap\relint F\neq\emptyset$,  $F = \conv(\Phi(\X)\cap\aff F)$, and $\XP = \Phi^{-1}(\aff F)$. 
    \end{lemma}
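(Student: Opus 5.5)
The plan has three parts: first the non-emptiness and relative-interior claims, then the face $F$ from \Cref{lemma:face} with $T=\Phi(\X)$, and finally the identification $\XP=\Phi^{-1}(\aff F)$, which is the real content.

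\emph{Non-emptiness and relative interior.} Since $\HhP\neq\emptyset$, pick $P\in\HhP$. By \Cref{lemma:convexhull}, $\E_P[\Phi]\in\conv\Phi(\X)=C$, and $\E_P[\Phi]\in S$, so $C\cap S\neq\emptyset$. Applying \Cref{lemma:face} with $T=\Phi(\X)$ gives a face $F$ of $C$ with $C\cap S\subseteq F$, $S\cap\relint F\neq\emptyset$ and $F=\conv(\Phi(\X)\cap\aff F)$. For the claim about $\X=\XP$, I expect it to follow once $\XP=\Phi^{-1}(\aff F)$ is proved. Indeed, $\X=\XP$ then gives $\Phi(\X)\subseteq\aff F$, so $C\subseteq\aff F$. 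Since $F\subseteq C$, this yields $\aff C=\aff F$. A convex set contained in a face with the same affine hull must coincide with that face, because $C\cap\aff F\subseteq F$ (shown in the last step of \Cref{lemma:face}). Hence $C=F$ and $S\cap\relint C\neq\emptyset$.

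\emph{The inclusion $\XP\subseteq\Phi^{-1}(\aff F)$.} Take $x\in\XP$ and $P\in\HhP$ with $P(\{x\})=\varepsilon>0$. Write $P=\varepsilon\delta_x+(1-\varepsilon)P'$ (if $\varepsilon=1$, then $\Phi(x)=\E_P[\Phi]\in C\cap S\subseteq F$ directly). Then
$$\E_P[\Phi]=\varepsilon\Phi(x)+(1-\varepsilon)\E_{P'}[\Phi]\in C\cap S\subseteq F,$$
and both $\Phi(x)$ and $\E_{P'}[\Phi]$ lie in $C$ by \Cref{lemma:convexhull}. The face property then gives $\Phi(x)\in F\subseteq\aff F$.

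\emph{The inclusion $\Phi^{-1}(\aff F)\subseteq\XP$.} Take $x$ with $\Phi(x)\in\aff F$, and fix $y\in S\cap\relint F$. Since $y$ is in the relative interior of $F$, for small $\alpha>0$ the point $w=(y-\alpha\Phi(x))/(1-\alpha)$ lies in $\aff F$ and close to $y$, hence in $F$. Because $F=\conv(\Phi(\X)\cap\aff F)$, we can write $w=\sum_i\beta_i\Phi(x_i)$ as a finite convex combination. Set $P=\alpha\delta_x+(1-\alpha)\sum_i\beta_i\delta_{x_i}$. This $P$ is finitely supported, so $\Phi$ is integrable, and $\E_P[\Phi]=y\in S$. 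Thus $P\in\HhP$ with $P(\{x\})\geq\alpha>0$, so $x\in\XP$.

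The step I expect to need the most care is this last one: choosing $\alpha$ via the relative interior, and confirming that $\Phi(x)\in\aff F$ is enough to make $w$ stay in $F$. It is routine once $y\in\relint F$ is used. Borel measurability of $\XP$ (for \Cref{lemma:XP}) then follows immediately, since $\aff F$ is closed and $\Phi$ is Borel.
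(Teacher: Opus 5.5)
Your proposal is correct and follows essentially the same route as the paper. You apply \Cref{lemma:face} with $T=\Phi(\X)$, use the face property for $\XP\subseteq\Phi^{-1}(\aff F)$, build a finitely supported null law through a point of $S\cap\relint F$ for the reverse inclusion, and get $F=C$ from $\aff F=\aff C$ when $\X=\XP$. Your separate handling of $P(\{x\})=1$ and your explicit use of $C\cap\aff F\subseteq F$ are small clarifications of steps the paper states more tersely.
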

    \begin{proof}
        Let $T$ denote the range of $\Phi$, so $C=\conv T$. Note that $C = \{\E_P[\Phi]\,:\,P\in\PR_\Phi\}$ (\Cref{lemma:convexhull}) and the condition $P\in\HhP$ is equivalent to saying that $\E_P[\Phi]\in C\cap S$. So, $\HhP\neq\emptyset$ implies that $C\cap S\neq\emptyset$. In particular, by \Cref{lemma:face}, there is a face $F$ of $C$ such that $C\cap S\subseteq F$, $S\cap \relint F \neq\emptyset$, and $F = \conv(T\cap\aff F)$. We will now show that $\XP = \Phi^{-1}(\aff F)$. 
        
        First, let us show that $\XP\subseteq\Phi^{-1}(\aff F)$. Fix $x\in\XP$. By definition, there is a $P\in\HhP$ such that $P(\{x\})=\alpha>0$. We can write $P$ as $P = \alpha\delta_x + (1-\alpha)Q$ where $Q\in\PR_\Phi$. We have $\E_P[\Phi] = \alpha\Phi(x) + (1-\alpha)\E_Q[\Phi]$. 
        Because $\E_P[\Phi]\in C\cap S\subseteq F$, $F$ is a face of $C$, and both $\Phi(x)$ and $\E_Q[\Phi]$ belong to $C$, it must be that $\Phi(x)\in F\subseteq \aff F$. So $x\in \Phi^{-1}(\aff F)$. 
    
        To show the reverse inclusion, fix $x\in\X$ such that $\Phi(x)\in\aff F$. Since, $F=\conv(T\cap\aff F)$, we have $\Phi(x)\in F$. Fix $y\in S\cap\relint F$, which is non-empty by construction. Since $y\in\relint F$ and $\Phi(x)\in F$, we can find $\alpha\in(0,1)$ and $z\in F$ such that $y = \alpha \Phi(x) + (1-\alpha)z.$ Since $z\in F\subseteq C$, it can be represented as a finite convex combination of points in $T$. As a consequence, there is a measure $Q\in\PR_\Phi$ such that $\E_Q[\Phi] = z$. We can define a probability measure $P$ as
        $P = \alpha\delta_x + (1-\alpha)Q.$
        By construction, we have that $\E_P[\Phi] = y$, and so $P\in\HhP$ because $y\in S$. Moreover, $P(\{x\})\geq \alpha>0$, and hence $x\in\XP$. 
    
        Finally, we are left to show that if $\X=\XP$, then $S\cap\relint C\neq\emptyset$. So, assume that $\X=\XP$. Then $\Phi^{-1}(\aff F ) = \X$, and so  $\Phi(\X)\subseteq\aff F$. Since $\aff C = \aff\Phi(\X)$, we get that $\aff C\subseteq\aff F$. On the other hand, $\aff F\subseteq \aff C$ because $F\subseteq C$, so $\aff C = \aff F$. Now, since $F$ is a face of $C$, the fact that $\aff F=\aff C$ implies that $F=C$. As $S \cap \relint F \neq \emptyset$, we get $S \cap \relint C \neq \emptyset$.
    \end{proof}
    \begin{reflem}{lemma:XP}
        $\XP$ is a Borel subset of $\X$ and $P(\XP)=1$ for every $P\in\HhP$.
    \end{reflem}
    \begin{proof}
        If $\HhP=\emptyset$, $\XP=\emptyset$ and the claim is trivial. So, let us assume that $\HhP\neq\emptyset$. First, let us show that $\XP$ is a Borel set. By \Cref{lemma:exF} there is a face $F$ of $\conv\Phi(\X)$ such that  $\XP = \Phi^{-1}(\aff F)$. Since $\aff F$ is a closed set in $\R^m$ and $\Phi$ is a Borel map, $\XP$ is a Borel set.

        We now show that $P(\XP)=1$ for all $P\in\HhP$. Let $\XP^c=\X\setminus\XP$. Suppose for contradiction that $\alpha = P(\XP^c)>0$ for some $P\in\HhP$. We can decompose $P$ as $P = \alpha P' + (1-\alpha)P''$, where $P'(\XP^c) = 1$ and $P''(\XP) = 1$. Since $\E_{P'}[\Phi]\in\conv(\Phi(\XP^c))$, we can find a finitely supported probability measure $Q' = \sum_{i=1}^k \lambda_i\delta_{x_i}$, with $x_i\in\XP^c$ and $\lambda_i>0$, such that $\E_{Q'}[\Phi] = \E_{P'}[\Phi]$. It is quickly checked that $Q = \alpha Q' + (1-\alpha)P''$ is in $\HhP$. Yet, $Q(\{x_1\}) = \alpha\lambda_1>0$, which is a contradiction since $x_1\notin\XP$.
    \end{proof}

\subsection{Relatively open reduction}
\begin{lemma}\label{lemma:ropen}
    Let $S\subseteq\R^m$ be convex and assume  $\HhP\neq\emptyset$. Let $A = \aff\Phi(\XP)$ and $S'=\relint(S\cap A)$. Then, $S'$ is non-empty and convex, and $\Hh_{\Phi,S'}\neq\emptyset$. Moreover, $\XP = \X_{\Phi,S'}$ and $\sP = \sigma_{\Phi,S'}$. In particular, $\LP = \Lambda_{\Phi,S'}$. 
\end{lemma}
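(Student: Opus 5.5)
The argument rests on \Cref{lemma:exF}, which describes $\XP$ through a face of $C=\conv\Phi(\X)$, and on \Cref{lemma:convexhull}, which identifies attainable means with convex hulls. Throughout, $A=\aff\Phi(\XP)$ and $G=S\cap A$.

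\textbf{Step 1: $S'$ is non-empty and convex, and $\Hh_{\Phi,S'}\neq\emptyset$.} Since $\HhP\neq\emptyset$, pick $P\in\HhP$. By \Cref{lemma:XP} and \Cref{lemma:convexhull}, $\E_P[\Phi]\in\conv\Phi(\XP)\cap S\subseteq G$, so $G$ is non-empty and convex. Hence $S'=\relint G$ is non-empty and convex.

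For the hypothesis, I would use the face $F$ from \Cref{lemma:exF}. It satisfies $\XP=\Phi^{-1}(\aff F)$ and $F=\conv(\Phi(\X)\cap\aff F)=\conv\Phi(\XP)$, so $\aff F=A$. Every $y\in G$ is the mean of some null law, and that law lives on $\XP$, so $y\in\conv\Phi(\XP)=F$. Thus $G\subseteq F$, and \Cref{lemma:exF} gives a point of $S$ in $\relint F$.

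The main claim here is $\relint G\cap\relint F\neq\emptyset$. Take $y_0\in S\cap\relint F$; it lies in $G$ because $\relint F\subseteq A$. Take $y_1\in\relint G$. The open segment $(y_0,y_1]$ lies in $\relint G$, since one endpoint is in $\relint G$ and the other is in $G$. Points of this segment close to $y_0$ lie in $\relint F$, because $\relint F$ is relatively open in $A\supseteq G$. Such a point $y$ lies in $S'\cap\relint F\subseteq S'\cap\conv\Phi(\XP)$. By \Cref{lemma:convexhull} it is the mean of a finitely supported law on $\XP$, which therefore belongs to $\Hh_{\Phi,S'}$.

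\textbf{Step 2: $\XP=\X_{\Phi,S'}$.} The inclusion $\X_{\Phi,S'}\subseteq\XP$ is immediate from $S'\subseteq S$.

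For the converse, take $x\in\XP$, so $\Phi(x)\in F$. Fix $y\in S'\cap\relint F$ from Step 1. Because $y\in\relint F$, we can write $y=\alpha\Phi(x)+(1-\alpha)z$ with $\alpha\in(0,1)$ and $z\in F$. Representing $z$ by a finitely supported law, exactly as in the proof of \Cref{lemma:exF}, gives a law with mean $y\in S'$ that charges $x$. Hence $x\in\X_{\Phi,S'}$.

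\textbf{Step 3: $\sP=\sigma_{\Phi,S'}$ and $\LP=\Lambda_{\Phi,S'}$.} By Step 2, $\aff\Phi(\X_{\Phi,S'})=A$. Since $S'\subseteq A$, we get $\sigma_{\Phi,S'}(\lambda)=\sup_{S'}\lambda\cdot y$, while $\sP(\lambda)=\sup_G\lambda\cdot y$. A linear function has the same supremum over a convex set and over its relative interior, because $G\subseteq\operatorname{cl}(\relint G)$. Therefore the two support functions agree. The sets $\LP$ and $\Lambda_{\Phi,S'}$ are defined from the same data ($\XP$ and $\sP$), so they coincide.

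I expect the main obstacle to be Step 1, namely showing that $\relint G$ meets the region $\conv\Phi(\XP)$ of attainable means. The segment argument above, combined with the face structure from \Cref{lemma:exF}, is the way I would resolve it.
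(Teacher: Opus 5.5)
Your proof is correct, and for the identity $\XP=\X_{\Phi,S'}$ it takes a more direct route than the paper.

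\textbf{Comparison with the paper.} The paper applies \Cref{lemma:exF} a second time, now to $S'$. This gives a face $F'$ of $C$ with $\X_{\Phi,S'}=\Phi^{-1}(\aff F')$. The paper then proves $F=F'$ using the face fact that a face meeting the relative interior of another face contains it, applied in both directions. You instead prove the two inclusions by hand:
\begin{itemize}
\item $\X_{\Phi,S'}\subseteq\XP$ follows from monotonicity, since $S'\subseteq S$;
\item the reverse inclusion follows by re-running the mixture construction from the proof of \Cref{lemma:exF} around a point $y\in S'\cap\relint F$.
\end{itemize}
Your route is more elementary and self-contained: it needs no second appeal to the face lemma and no face-intersection theorem. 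The paper's route instead reuses \Cref{lemma:exF} as a black box. The remaining parts match the paper in substance. Your segment argument for $\Hh_{\Phi,S'}\neq\emptyset$ is an explicit version of the paper's observation that $y\in S\cap\relint F$ lies in the closure of $S'$ within $A$, while $\relint F$ is a neighbourhood of $y$ in $A$. The equality of support functions, via the closure of $\relint G$, is the same argument.

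\textbf{A sentence to delete.} Your claim that every $y\in G$ is the mean of some null law, hence $G\subseteq F$, is false in general: $G=S\cap A$ need not lie in $C=\conv\Phi(\X)$. For example, take $\X=[0,1]$, $\Phi(x)=x$ and $S=\R$. Then $A=G=\R$, while $F=C=[0,1]$. What does hold is $C\cap S\subseteq F$. Nothing downstream uses $G\subseteq F$, so the proof survives: the segment argument only needs $G\subseteq A$ and that $\relint F$ is relatively open in $A=\aff F$. Simply remove that sentence.
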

\begin{proof}
    Let $C=\conv\Phi(\X)$. By \Cref{lemma:exF}, there exists a face $F$ of $C$ such that $C\cap S \subseteq F$, $S\cap\relint F\neq\emptyset$, $F = \conv(\Phi(\X)\cap\aff F)$, and $\XP=\Phi^{-1}(\aff F)$. In particular, $\Phi(\XP) = \Phi(\X)\cap\aff F$, and so
    $$A = \aff\Phi(\XP) = \aff(\Phi(\X)\cap\aff F) = \aff F\,.$$ Since $S\cap\relint F\neq\emptyset$ and $\relint F\subseteq\aff F = A$, we have $S\cap A\neq\emptyset$. Because $S\cap A$ is convex, its relative interior $S'$ is non-empty and convex. 

    We now show that $\Hh_{\Phi,S'}\neq\emptyset$. Take $y\in S\cap\relint F$. Since $y\in S\cap A$, it belongs to the closure of $S'$ in $A$ and hence every open neighbourhood of $y$ in $A$ intersects $S'$. Since $\relint F$ is an open neighbourhood of $y$ in $A$, we obtain that $\relint F\cap S'\neq\emptyset$. Since $F\subseteq C$, this implies $C\cap S'\neq\emptyset$, which is enough to ensure that $\Hh_{\Phi,S'}$ is non-empty by \Cref{lemma:convexhull}.

    Because $\Hh_{\Phi,S'}\neq\emptyset$, we can apply \Cref{lemma:exF} with $S'$ in place of $S$. There exists a face $F'$ of $C$ such that $C\cap S'\subseteq F'$, $S'\cap\relint F'\neq\emptyset$, and $\X_{\Phi,S'} = \Phi^{-1}(\aff F')$.
    We will show that $F = F'$. First, take $z\in S'\cap\relint F$, which is non-empty as shown above. Since $z\in S'$ and $z\in F\subseteq C$, we have $z\in C\cap S'\subseteq F'$. Thus, $F'$ is a face of $C$ that intersects the relative interior of the face $F$. Since $F$ and $F'$ are both faces of $C$, this implies $F\subseteq F'$ (see, e.g., Theorem 18.1 in \citealt{rockafellar1970convex}).
    Conversely, take $w\in S'\cap\relint F'$, which is non-empty by the choice of $F'$. Because $w\in S'\subseteq A = \aff F$ and $w\in F'\subseteq C$, we have $w\in C\cap\aff F$. Since $F$ is a face of $C$, we have $C\cap\aff F = F$, and so $w\in F$. Thus, the face $F$ intersects the relative interior of the face $F'$, which implies $F'\subseteq F$. Therefore, $F = F'$. So,
    $$\X_{\Phi,S'} = \Phi^{-1}(\aff F') = \Phi^{-1}(\aff F) = \XP$$ by \Cref{lemma:exF}.

    Now, let us show that $\sP = \sigma_{\Phi,S'}$. By definition, $\sP(\lambda) = \sup_{y\in S\cap A}\lambda\cdot y$. Similarly, since $\XP = \X_{\Phi,S'}$, we have $\sigma_{\Phi,S'}(\lambda) = \sup_{y\in S'\cap A}\lambda\cdot y$. We note that the supremum of any linear functional over $S\cap A$ is equal to its supremum over its relative interior $S'$. As $S' = S'\cap A$, we get $\sP = \sigma_{\Phi,S'}$. It follows immediately that  $\LP = \Lambda_{\Phi,S'}$.
\end{proof}

\section[Proof of Lemma 4]{Proof of \Cref{lemma:dom}}\label{app:proof_dom}
\begin{reflem}{lemma:dom}
    Assume $\HhP\neq\emptyset$ and $\X=\XP$. Let $E$ be an e-process for $\HhP^\infty$. Then there is $\lambda\in\LP$ that dominates $E$ at level $1$.
\end{reflem}

\begin{proof}
    Let $C = \conv\Phi(\X)$. We define $U\subseteq \R^{m+1}$ as $$U = (S\cap\aff C)\times(1,+\infty)$$ and $V\subseteq\R^{m+1}$ as $$V = \{(\Phi(x),z)\in\R^{m+1}\,:\,x\in\X\,,\;\exists Q\in\HhP^\infty\,,\;\exists\tau\in\T_\star\,,\;z\leq\E_Q[E^x_\tau]\}\,.$$
    The fact that $\HhP\neq\emptyset$ implies that $S\cap\aff C\neq\emptyset$ (cf.~\Cref{lemma:convexhull} in \Cref{app:convex}), so $U\neq\emptyset$. $V$ is also non-empty, since $\HhP^\infty$ is non-empty and so $(\Phi(x),0)\in V$ for any $x\in \XP=\X$. Moreover, $U$ is a convex set. We let $K = \conv V$. 

    Let us now show by contradiction that $K\cap U=\emptyset$. Assume that this was not the case, and fix $(y,z)\in K\cap U$. Since $(y,z)\in K$, a slight refinement of Carathéodory theorem (\Cref{lemma:caradown} in \Cref{app:convex}) ensures that we can find $m'\in[1: m+1]$ distinct points $\bar x_1, \dots, \bar x_{m'}$ in $\X$, measures $Q_1,\dots, Q_{m'}$ in $\HhP^\infty$, stopping times $\tau_1,\dots,\tau_{m'}$ in $\T_\star$, and non-negative weights $w_1,\dots,w_{m'}$ summing up to $1$, such that 
    $$y = \sum_{i=1}^{m'}w_i\Phi(\bar x_i)\,;\qquad\qquad z \leq\sum_{i=1}^{m'}w_i\E_{Q_i}[E^{\bar x_i}_{\tau_i}]\,.$$
    Let $P = \sum_{i=1}^{m'}w_i\delta_{\bar x_i}$. Then, $\E_P[\Phi] = y\in S$, and so $P\in\HhP$. We may now define a probability measure $Q_\star$ on $\X^\infty$ by requiring that when $X^\infty\sim Q_\star$, its first marginal is $X_1\sim P$ and, for each $i=1,\dots, m'$, the conditional law of $X^{2:\infty}$ given $X_1=\bar x_i$ is $Q_i$. It is then straightforward that $Q_\star\in\HhP^\infty$. We also define $\tau_\star\in\T_\star$ as $\tau_\star(x^\infty) = 1 + \tau_i(x^{2:\infty})$ if $x_1 = \bar x_i$ for some $i$, and $1$ otherwise. With this construction, we get
    $$\E_{Q_\star}[E_{\tau_\star}] = \sum_{i=1}^{m'} w_i \E_{Q_i}[E^{\bar x_i}_{\tau_i}]\geq z\,.$$
    Yet, since $(y,z)\in U$, it must be that $z>1$, which contradicts the fact that $E$ is an e-process. We thus conclude that $K\cap U= \emptyset$. 

    \Cref{lemma:convexhull} implies that the projection of $K$ onto the first $m$ coordinates is $C$. Moreover, since $K$ is downward closed in the last coordinate, we have $\aff K = \aff C\times\R$. We denote as $L_C$ the linear subspace of $\R^m$ parallel to $\aff C$. Since $U\subseteq\aff K$ and $K\cap U=\emptyset$, we can  apply the separating hyperplane theorem (see \Cref{thm:proper_separation} in \Cref{app:convex}) in the affine subspace $\aff K$, and find a non-zero pair $(a,b)\in L_C\times\R$ and a scalar $c\in\R$ such that, for all $(y,z)\in K$ and $(y',z')\in U$, 
    \begin{equation}\label{eq:sep}a\cdot y + bz \leq c\leq a\cdot y' + bz'\,.\end{equation}
    Because $z'$ can be arbitrarily large in $U$ for every valid $y'$, we must have $b\geq 0$. We now show that $b>0$. As we are assuming that $\X=\XP$, we can find a point $y_\star\in S\cap\relint C$ (cf.~\Cref{lemma:face} in \Cref{app:convex}). Suppose that $b=0$. Then \eqref{eq:sep} implies that  $a\cdot y\leq a\cdot y_\star$ for every $y\in C$. Fix any $y\in C$. As $y_\star\in\relint C$, there is $\varepsilon>0$ such that $y' = y_\star + \varepsilon(y_\star - y)\in C$. In particular, $a\cdot y_\star + \varepsilon(a\cdot y_\star - a\cdot y) = a\cdot y'\leq a\cdot y_\star$, and so $a\cdot y_\star = a\cdot y$. Since this holds for all $y\in C$, $a$ must be orthogonal to $L_C$. Yet, $a\in L_C$, and so $a = 0$. But $(a,b) = (0,0)$ contradicts the requirement that $(a,b)$ is a non-zero separator. 

    Once established that $b>0$, we  define $\lambda =- a/b\in L_C$. Dividing \eqref{eq:sep} by $b$,
    \begin{equation}\label{eq:sep_}
        z\leq\lambda\cdot y + \inf_{(y',z')\in U}(-\lambda\cdot y' + z') = 1 + \lambda\cdot y - \sP(\lambda)
    \end{equation}
    for every $(y,z)\in K$. In particular, since $(\Phi(x),0)\in K$ (for any $x\in\X$), we have $\lambda\in\LP$.
    
    Now, fix $x\in\X$, $Q\in\HhP^\infty$, and $\tau\in\T_\star$. Since $\X=\XP$, there is $P\in\HhP$ such that $P(\{x\})=\varepsilon>0$. Then, $P\otimes Q\in\HhP^\infty$ and so the fact that $E$ is an e-process requires that $$\E_Q[E_\tau^x]\leq \E_{P\otimes Q}[E_{1+\tau}]/\varepsilon\leq 1/\varepsilon<\infty\,.$$
    In particular, $(\Phi(x), \E_Q[E^x_\tau])\in V\subseteq K$. Then, \eqref{eq:sep_} yields
    $$\E_Q[E^x_\tau]\leq 1+\lambda\cdot\Phi(x) - \sP(\lambda) = \e_\lambda(x)\,,$$ which concludes the proof. 
\end{proof}

\section{Semi-analyticity proof}\label{app:coarse}
We let $d = \min(|\XP|,m+1)$. By \Cref{lemma:XP}, if $\HhP\neq\emptyset$ then  $d\geq 1$.

\subsection{Coarsened hypotheses}
For $T\geq 1$ (possibly, $T=\infty$), let $\Tt_T$ be the full $d$-ary tree of depth $T$ and denote as $\ROOT$ its root. For $v\in\Tt_T$, we let $|v|$ denote its depth (i.e., tree-distance from $\ROOT$). Let $\V_T = \{v\in\Tt_T\,:\,|v|>0\}$ be the set of  non-root nodes, $\I_T = \{v\in\Tt_T\,:\,|v|<T\}$ be the set of internal nodes, and $\Ell_T = \{v\in\Tt_T\,:\,|v|=T\}$ the set of leaves. For each node $v \in \V_T$, let $\pa(v)$ denote its parent. For any $v\in\I_T$, let $\ch(v)$ denote the set of its $d$ children. We also set $\ch(v)=\emptyset$ if $v\in\Ell_T$.

We define the parameter space $\Theta_T \subseteq (\XP \times [0,1])^{\V_T}$ as the set of all elements $\theta = (\hat x_v, \hat p_v)_{v \in \V_T}$ satisfying the following conditions for every   $v \in \I_T$:
\begin{enumerate}
    \item $\sum_{u \in \ch(v)} \hat p_u = 1$;
    \item $\sum_{u \in \ch(v)} \hat p_u \Phi(\hat x_u) \in S$;
    \item for every distinct $u,u'\in\ch(v)$, $\hat x_u$ and $\hat x_{u'}$ are distinct. 
\end{enumerate}

Any parameterisation $\theta \in \Theta_T$ assigns a unique location trace to each node. For any node $v \in \Tt_T$, let $\xi_\theta(v) = (\hat x_{u_1}, \dots, \hat x_{u_k}) \in \XP^{|v|}$ denote the sequence of locations along the path from the root down to $v$ (with $\xi_\theta(\ROOT)$ being the empty sequence). By construction (see condition 3 in the definition of $\Theta_T$), the mapping $v\mapsto\xi_\theta(v)$ is injective. We also define $\pi_\theta(v) = \prod_{i=1}^{|v|} \hat p_{u_i}\in[0,1]$ if $v\in\V_T$, and $\pi_\theta(\ROOT) = 1$. 

Every parameterisation $\theta \in \Theta_T$ naturally induces a probability measure $Q_\theta$ on $\XP^T$. This measure corresponds to the process that, at step $t$, transitions to a child $u \in \ch(v_{t-1})$ with conditional probability $\hat p_u$ and takes the location value $\hat x_u$. We remark that under this construction, \begin{equation}\label{eq:pi}\pi_\theta(v) = Q_\theta\left(\left\{x^T\in\XP^T\,:\,x^{|v|} = \xi_\theta(v)\right\}\right)\,.\end{equation}
We define the coarsened hypothesis $\bHhP^T$ as the set 
$$ \bHhP^T = \{Q_\theta \,:\, \theta \in \Theta_T\} \,. $$
\begin{lemma}\label{lemma:coarse}$\bHhP^T \subseteq \HhP^T$. 
\end{lemma}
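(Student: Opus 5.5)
Fix $T\in[1:\infty]$ and $\theta=(\hat x_v,\hat p_v)_{v\in\V_T}\in\Theta_T$. The plan is to describe $Q_\theta$ through its conditional laws and check the two defining requirements of $\HhP^T$ directly: integrability of $\Phi(X_t)$ for each finite $t$, and $\E_{Q_\theta}[\Phi(X_t)\mid X^{t-1}]\in S$ almost surely. Because the tree has finite branching, everything reduces to finite sums over nodes.

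First, I will make the construction of $Q_\theta$ precise. For finite $T$, $Q_\theta=\sum_{v\in\Ell_T}\pi_\theta(v)\,\delta_{\xi_\theta(v)}$. For $T=\infty$, I will take the unique measure on $\XP^\infty$ whose finite-dimensional marginals on $\X^t$ are $\sum_{|v|=t}\pi_\theta(v)\delta_{\xi_\theta(v)}$. These marginals are consistent by condition 1 of $\Theta_T$, so the Ionescu-Tulcea (or Kolmogorov) extension theorem applies, since $\X$ is standard Borel. Under $Q_\theta$, each $X^{t}$ is therefore a discrete random variable supported on the finite set $\{\xi_\theta(v):|v|=t\}$. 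Integrability is then immediate: $\E_{Q_\theta}[\|\Phi(X_t)\|_1]$ is a finite sum of finite terms.

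Second, I will compute the conditional expectation. Fix a finite $t\in[1:T]$. Since $X^{t-1}$ takes finitely many values, a version of $\E_{Q_\theta}[\Phi(X_t)\mid X^{t-1}]$ is given on each atom $\{X^{t-1}=\xi_\theta(v)\}$ with $\pi_\theta(v)>0$ by elementary conditioning. This requires knowing which atom corresponds to which node, and here I use condition 3: the map $v\mapsto\xi_\theta(v)$ is injective, so distinct nodes at depth $t-1$ give distinct histories. Given $X^{t-1}=\xi_\theta(v)$, the next observation equals $\hat x_u$ with probability $\hat p_u$ for $u\in\ch(v)$. By injectivity these locations are distinct, so no two children are merged. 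Hence the conditional mean equals $\sum_{u\in\ch(v)}\hat p_u\Phi(\hat x_u)$, which lies in $S$ by condition 2. The atoms with $\pi_\theta(v)=0$, together with everything outside the support, form a $Q_\theta$-null set, so the constraint holds $Q_\theta$-a.s. For $t=1$ the history is trivial and the same computation at the root $\ROOT$ applies.

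The only mildly delicate point is rigorously justifying the transition description, namely that conditioning on the history $\xi_\theta(v)$ recovers exactly the node $v$. This is what injectivity (condition 3) provides; without it, two nodes could share a trace and the conditional law would become a mixture of their children. In the $T=\infty$ case one must also check that the conditional expectation computed from the finite marginals is valid for the extended measure, which holds because $X^{t}$ depends only on the first $t$ coordinates. Apart from these checks, the argument is routine.
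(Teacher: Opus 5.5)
Your proof is correct and follows essentially the same route as the paper: finite support of $X^t$ at each depth gives integrability, and elementary conditioning on the atoms $\{X^{t-1}=\xi_\theta(v)\}$ with $\pi_\theta(v)>0$ gives the conditional mean $\sum_{u\in\ch(v)}\hat p_u\Phi(\hat x_u)$, which lies in $S$ by condition 2. Your explicit Kolmogorov-extension construction for $T=\infty$ spells out what the paper leaves implicit. Your appeal to injectivity (condition 3) is legitimate but not actually needed here: if two nodes shared a trace, the conditional mean would be a convex combination of points of $S$, which stays in $S$ because $S$ is convex.
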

\begin{proof}
To show that $\bHhP^T \subseteq \HhP^T$, take any $\theta = (\hat x_v, \hat p_v)_{v \in \V_T}\in\Theta_T$ and let $X^T\sim Q_\theta$. For any finite $t\in[1:T]$, the  support of $X^t$ can take at most $d^t$ values. In particular,  $Q_\theta\in\PR_\Phi^T$. Furthermore, for any $v$ with $\pi_\theta(v)>0$ and $|v|=t-1$, $$\E_{Q_\theta}[\Phi(X_t)|X^{t-1} = \xi_\theta(v)] = \sum_{u \in \ch(v)} \hat p_u \Phi(\hat x_u)\,.$$ Since $\theta \in \Theta_T$, this sum belongs to $S$, implying that $\E_{Q_\theta}[\Phi(X_t)|X^{t-1}] \in S$, $Q_\theta$-almost surely. We conclude that $Q_\theta \in \HhP^T$.
\end{proof}
We now verify the measurability of the parameter set $\Theta_T$.
\begin{lemma}\label{lemma:ThBor}
    If $S$ is Borel and $T<\infty$, $\Theta_T$ is a Borel subset of $(\XP \times [0,1])^{\V_T}$. 
\end{lemma}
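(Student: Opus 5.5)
The plan is to write $\Theta_T$ as a finite intersection of Borel subsets of the ambient space $\Omega_T=(\XP\times[0,1])^{\V_T}$. Since $T<\infty$, the tree $\Tt_T$ is finite, so $\V_T$ and $\I_T$ are finite sets. Hence $\Omega_T$ is a finite product of standard Borel spaces. Here $\XP$ is Borel in $\X$ by \Cref{lemma:XP}, so it is standard Borel. It therefore suffices to show that, for each fixed internal node $v\in\I_T$, each of the three defining conditions cuts out a Borel subset of $\Omega_T$, and then intersect over the finitely many $v$.

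For every $u\in\V_T$, the coordinate projections $\theta\mapsto\hat x_u$ and $\theta\mapsto\hat p_u$ are Borel. They are continuous for the product topology if we fix a Polish topology on $\XP$ compatible with its Borel structure.

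\emph{Condition 1.} The map $\theta\mapsto\sum_{u\in\ch(v)}\hat p_u$ is a finite sum of Borel real-valued maps, hence Borel. The condition is the preimage of the closed set $\{1\}$, so it defines a Borel set.

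\emph{Condition 2.} The map $\theta\mapsto\sum_{u\in\ch(v)}\hat p_u\Phi(\hat x_u)\in\R^m$ is Borel. Indeed, $\Phi$ is Borel, so $\theta\mapsto\Phi(\hat x_u)$ is Borel as a composition. Products and finite sums of Borel maps into Euclidean spaces are Borel. This is exactly where the assumption that $S$ is Borel enters: the set defined by condition 2 is the preimage of $S$ under this Borel map, hence Borel.

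\emph{Condition 3.} For distinct $u,u'\in\ch(v)$, the condition $\hat x_u\neq\hat x_{u'}$ is the preimage of the complement of the diagonal $\Delta=\{(x,x):x\in\XP\}$ under the Borel map $\theta\mapsto(\hat x_u,\hat x_{u'})$. The diagonal of a standard Borel space is Borel in the product, as it is closed for any compatible Polish topology. Taking complements and intersecting over the finitely many pairs $(u,u')$ gives a Borel set.

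Intersecting all these finitely many Borel sets over $v\in\I_T$ yields $\Theta_T$, which is therefore Borel. The only step needing care is condition 3, namely the Borel measurability of the diagonal. This relies on $\XP$ being standard Borel (countably separated), and that property is exactly what \Cref{lemma:XP} together with the standing assumption on $\X$ provides. The case $\XP$ countable or finite with $d<m+1$ is covered by the same argument.
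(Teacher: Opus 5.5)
Your proposal is correct and follows essentially the same route as the paper: you write $\Theta_T$ as a finite intersection, over the internal nodes, of preimages of $\{1\}$, of $S$, and of the off-diagonal set under Borel coordinate maps. Your explicit justification that the diagonal of the standard Borel space $\XP$ is Borel fills in the one detail the paper leaves implicit when it asserts that its indicator for repeated child locations is Borel.
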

\begin{proof}
    $\XP$ is a Borel subset of the standard Borel space $\X$ by \Cref{lemma:XP} and $\V_T$ is finite, so the ambient product space $(\XP \times [0,1])^{\V_T}$ is  standard Borel. $\Phi$ is a Borel map, so the mapping $\theta \mapsto \sum_{u \in \ch(v)} \hat p_u \Phi(\hat x_u)$ is Borel measurable for each $v \in \I_T$. Since we assume $S$ to be Borel, its pre-image under this mapping is a Borel set. Clearly, the pre-image of $\{1\}$ under $\theta\mapsto \sum_{u\in\ch(v)}\hat p_u$ is also Borel. Finally, the function that maps $\theta$ to $1$ if any two distinct children $u$ and $u'$ of $v$ satisfy $\hat x_u = \hat x_{u'}$, and $0$ otherwise, is also Borel, and its pre-image of $\{0\}$ is a Borel set. Because $T$ is finite, $\Theta_T$ is Borel, as the finite intersection of all these pre-images over the internal nodes.
\end{proof}

\subsection{Equivalence of suprema}
\begin{lemma}\label{lemma:supsup}
    Fix $T\geq 1$ and let $f:\XP^T\to[0,\infty]$ be a Borel function. Then
    $$\sup_{P\in\HhP^T}\E_P[f] = \sup_{P\in\bHhP^T}\E_P[f]\,.$$
\end{lemma}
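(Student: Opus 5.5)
The inequality ``$\geq$'' is immediate from \Cref{lemma:coarse}: since $\bHhP^T\subseteq\HhP^T$, the supremum over the larger set dominates. Here $f$ is defined only on $\XP^T$, which is harmless since every $P\in\HhP^T$ gives full mass to $\XP^T$ (apply \Cref{lemma:XP} conditionally at each step). For ``$\leq$'' I plan a backward-induction (dynamic programming) argument over the finite horizon $T$. The goal is to show that for every $P\in\HhP^T$ and every $\epsilon>0$ (or every finite level $c<\E_P[f]$) there is $\theta\in\Theta_T$ with $\E_{Q_\theta}[f]\geq c$.

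Concretely, I would proceed by induction on $T$, proving the statement for every Borel $f$ simultaneously.

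\textbf{Base case $T=1$.} Fix $P\in\HhP$ and a level $c<\E_P[f]$. Consider the set
$$A=\{(\Phi(x),z): x\in\XP,\ z\leq f(x),\ z\in\R\}\subseteq\R^{m+1}.$$
I first reduce to bounded $f$ by monotone convergence, replacing $f$ by $f\wedge n$ and, if needed, truncating so that the relevant expectation is finite. The point $(\E_P[\Phi],\E_P[f])$ then lies in $\conv A$. This is the infinite-dimensional analogue of \Cref{lemma:convexhull}, applied to the map $x\mapsto(\Phi(x),f(x))$ on $\XP$ and then moved down in the last coordinate. \Cref{lemma:caradown} writes it as a convex combination of at most $m+1$ points $(\Phi(x_i),z_i)$ with $z_i\leq f(x_i)$. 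Merging coincident $x_i$ keeps the weights valid and only raises the bound. This gives a discrete law $Q=\sum_i w_i\delta_{x_i}$ with $\E_Q[\Phi]=\E_P[\Phi]\in S$ and $\E_Q[f]\geq\E_P[f]$. The number of atoms is at most $\min(m+1,|\XP|)=d$. If fewer than $d$ atoms are used, I pad the tree with extra children carrying weight $0$ at distinct points of $\XP$, which is possible since $d\leq|\XP|$. Hence $Q\in\bHhP^1$.

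\textbf{Inductive step.} For general $T$, write $P$ as the first marginal $P_1\in\HhP$ together with a regular conditional kernel $P(\cdot\mid x_1)$. This kernel lies in $\HhP^{T-1}$ for $P_1$-a.e.\ $x_1$; the standard Borel assumption supplies the disintegration. Define the continuation value
$$g(x_1)=\sup_{R\in\HhP^{T-1}}\E_R[f(x_1,\cdot)].$$
By the induction hypothesis, $g(x_1)$ equals the same supremum over $\bHhP^{T-1}$. Then $\E_P[f]\leq\E_{P_1}[g]$.

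\textbf{Main obstacle: measurability of $g$.} $g$ need not be Borel, so the base case cannot be applied to it directly. To avoid this, I would not use $g$ itself. Instead I apply the base-case argument to the Borel function $h(x_1)=\E_{P(\cdot\mid x_1)}[f(x_1,\cdot)]$, which is Borel by the disintegration. This produces at most $d$ atoms $x_i$ with weights $w_i$, such that $\E[\Phi]\in S$ and $\sum_i w_ih(x_i)\geq \E_P[f]-\epsilon$. At each of these finitely many atoms, the induction hypothesis gives a tree $\theta_i\in\Theta_{T-1}$ with
$$\E_{Q_{\theta_i}}[f(x_i,\cdot)]\geq h(x_i)-\epsilon,$$
noting that $h(x_i)\leq g(x_i)$ whenever the kernel at $x_i$ is in the null. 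The atoms may land on a $P_1$-null set where the kernel is not in the null. To prevent this, I first modify the kernel on a $P_1$-null set to some fixed element of $\HhP^{T-1}$; this changes neither $\E_P[f]$ nor the Borel measurability of $h$.

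\textbf{Gluing.} Grafting the trees $\theta_i$ under the root children $(x_i,w_i)$ gives $\theta\in\Theta_T$ with $\E_{Q_\theta}[f]\geq\E_P[f]-2\epsilon$. Letting $\epsilon\to0$ (or using levels $c$ in the unbounded case) concludes the proof. Only finitely many choices are made, so no measurable selection is needed.
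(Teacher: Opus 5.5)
Your proposal is correct and follows essentially the same route as the paper. For $T=1$, you truncate to bounded $f$ and apply \Cref{lemma:convexhull} and \Cref{lemma:caradown} to the downward-closed set $A$. For $T\geq 2$, you apply the one-step result to the Borel conditional-expectation function from a regular conditional distribution (modified on a null set to lie in the null), then graft trees from the induction hypothesis at the finitely many atoms. The only differences are cosmetic: the paper keeps exact domination $\E_{\bar P}[f]\geq\E_P[f]$ instead of your $\epsilon$-slack, and you spell out the merging of coincident atoms and the zero-weight padding of the $d$-ary tree, which the paper leaves implicit.
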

\begin{proof}
    Without loss of generality we can assume that $f$ is bounded. Indeed, if this is not the case, we can just consider the sequence $(f^{(n)})_{n\geq1}$, where $f^{(n)} = \min(f, n)$. Once the claim is proven for the bounded case, one obtains by monotonic convergence and swapping suprema that
    $$\sup_{P\in\HhP^T}\E_P[f] =  \sup_{n\geq 1}\sup_{P\in\HhP^T}\E_P[f^{(n)}] = \sup_{n\geq 1}\sup_{P\in\bHhP^T}\E_P[f^{(n)}] = \sup_{P\in\bHhP^T}\E_P[f]\,.\footnote{Here and where necessary, we extend $f$ to be a function on $\X^T$, by setting $f(x^T) = 0$ for $x^T\in\X^T\setminus\XP^T$. Alternatively, one can show that $P(\XP^T) = 1$, which follows from \Cref{lemma:XP}, and harmlessly restrict $P$ to be a probability measure on $\XP^T$.}$$ 
    Therefore, we henceforth assume that $f$ is bounded.

    If $\HhP=\emptyset$, the statement is trivial. So, we  assume that $\HhP\neq\emptyset$. The inequality $\sup_{P\in\HhP^T}\E_P[f] \geq \sup_{P\in\bHhP^T}\E_P[f]$ follows directly from \Cref{lemma:coarse}, so we only need to prove the reverse inequality.  We now show by induction   that for any $P\in\HhP^T$ we can find a $\bar P\in\bHhP^T$ such that $\E_{\bar P}[f]\geq \E_P[f]$. 
    
    First let us consider the case $T=1$. Fix $P\in\HhP$. By \Cref{lemma:XP}, we can restrict the domain of $P$ to $\XP$. We now show that there exists $\bar P\in\bHhP$ such that $\E_{P'}[f]\geq\E_P[f]$. Let $$A = \{(\Phi(x), z)\,:\,x\in\XP\,,\;z\leq f(x)\}\subseteq\R^{m+1}\,.$$
    By \Cref{lemma:convexhull}, $(\E_P[\Phi], \E_P[f])\in \conv A$. \Cref{lemma:caradown} implies that we can find $d'\leq d=\min(m+1, |\XP|)$ distinct points $\bar x_1,\dots, \bar x_{d'}$ in $\XP$ and non-negative weights $w_1,\dots, w_{d'}$ summing to $1$ such that 
    $$\sum_{i=1}^{d'} w_i\Phi(\bar x_i) = \E_P[\Phi]\,,\qquad\sum_{i=1}^{d'} w_i f(\bar x_i) \geq \E_P[f]\,.$$ 
    In particular,  $\bar P = \sum_{i=1}^{d'} w_i \delta_{\bar x_i}\in\bHhP$ satisfies the desired requirement. 

    Let us now consider the case $T\geq 2$, assuming that the claim holds up to $T-1$. Fix $P\in\HhP^T$ and let $P'$ denote the marginal on the first  component. We let $(P_{x})_{x\in\X}$ denote a regular conditional distribution of $P$ of the components from $2$ to $T$, given the first component.  By definition of $\HhP^T$, $P_x\in\HhP^{T-1}$ holds for $P'$-almost every $x$. Without loss of generality (potentially modifying $P_x$ when $x$ is in a $P'$-null set) we can assume that $P_x\in\HhP^{T-1}$ for every $x\in\XP$. We let $g:\XP\to[0,\infty)$ be given by $$g(x) = \E_{P_x}[f(x, X_{2:T})]\,.$$ Then, $g$ is bounded (as we are assuming that $f$ is) and Borel (since we are considering a regular conditional distribution). Moreover, $\E_{P'}[g] = \E_P[f]$. 

    From what we have shown already, we know that there is $\bar P'  \in\bHhP$ such that $\E_{\bar P'}[\Phi] = \E_{P'}[\Phi]$ and $\E_{\bar P'}[g] \geq \E_{P'}[g]=\E_P[f]$. Let $\{\bar x_1,\dots, \bar x_{d'}\}\subseteq\XP$ (with $d'\leq d$) be the finite support of $\bar P'$. For each $i\in[1:d']$, we let $f_i:\X^{T-1}\to[0,\infty]$ be defined by $f_i(x^{T-1}) = f(\bar x_i, x^{T-1})$. Applying the induction hypothesis, we can find $\bar Q_1,\dots, \bar Q_{d'}$ in $\bHhP^{T-1}$ such that $$\E_{\bar Q_i}[f_i]\geq \E_{P_{\bar x_i}}[f_i]=g(\bar x_i)$$ for every $i\in[1:d']$. We now denote as $\bar P$ the discrete probability measure on $\X^{T}$ such that when $X^T\sim\bar P$, $X_1\sim \bar P'$ and $X^{2:T}$ has law $\bar Q_i$ when conditioned on $\{X_1=\bar x_i\}$. By construction, $\bar P\in\bHhP^T$ and 
    $$\E_{\bar P}[f] = \sum_{i=1}^{d'}\bar P'(\{\bar x_i\}) \E_{\bar Q_i}[f_i] \geq \E_{\bar P'}[g]\geq \E_P[f]\,.$$
    This concludes the proof. 
\end{proof}

\begin{corollary}\label{cor:supsup}
Let $f=(f_s)_{s\geq0}$ be a non-negative process, with each
$f_s:\XP^s\to[0,\infty]$ Borel. Then, for every $t\geq0$ and every $x^t\in\XP^t$,
$$\sup_{Q\in\HhP^\infty}\sup_{\tau\in\T_\star}\E_Q[f^{x^t}_\tau]
=
\sup_{Q\in\bHhP^\infty}\sup_{\tau\in\T_\star}\E_Q[f^{x^t}_\tau]\,,$$
where $f^{x^t}_s(y^s)=f_{t+s}(x^t,y^s)$.
\end{corollary}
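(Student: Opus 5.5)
The plan is to reduce the infinite-horizon statement to the finite-horizon \Cref{lemma:supsup}, using the fact that every $\tau\in\T_\star$ is bounded.

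First, the degenerate case. If $\HhP=\emptyset$, then $\HhP^\infty=\emptyset$, since the constraint at $t=1$ already fails. The coarsened class $\bHhP^\infty$ is then empty too: condition 2 in the definition of $\Theta_\infty$ at the root would produce an element of $\HhP$. So both suprema are over empty sets and agree. From now on assume $\HhP\neq\emptyset$.

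\textbf{The inequality ``$\geq$''.} This follows from $\bHhP^\infty\subseteq\HhP^\infty$. The proof of \Cref{lemma:coarse} applies verbatim with $T=\infty$, because the conditions defining $\Theta_\infty$ are imposed node by node at each finite depth. One small point is that $f$ is only defined on $\XP^s$. Every $Q$ in either class gives full mass to $\XP^\infty$: for $\HhP^\infty$, apply \Cref{lemma:XP} conditionally at each step. So I extend $f_s$ by $0$ outside $\XP^s$, and this does not affect the expectations.

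\textbf{The inequality ``$\leq$''.} Fix $Q\in\HhP^\infty$ and $\tau\in\T_\star$, and take a finite $N$ with $\tau\leq N$. Define the Borel function
$$g(y^N)=\sum_{s=0}^{N}\mathbf 1\{\tau(y)=s\}\,f^{x^t}_s(y^s)\in[0,\infty].$$
This is a function of $y^N$ alone, since $\{\tau=s\}$ depends only on the first $s$ coordinates. It satisfies $\E_Q[f^{x^t}_\tau]=\E_{Q^N}[g]$, where $Q^N$ is the marginal of $Q$ on $\X^N$.

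The marginal $Q^N$ belongs to $\HhP^N$, because the conditional constraints up to time $N$ involve only the first $N$ coordinates. \Cref{lemma:supsup} allows unbounded $[0,\infty]$-valued $f$, so it gives $\theta\in\Theta_N$ with $\E_{Q_\theta}[g]\geq\E_{Q^N}[g]$, up to an arbitrarily small slack if the supremum is approached rather than attained.

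\textbf{Extension to infinite depth.} I expect this step to need the most care. I would extend $\theta$ to some $\theta'\in\Theta_\infty$. To do so, fix once and for all an element $\bar P_0=\sum_{i\leq d'}w_i\delta_{\bar x_i}\in\bHhP$. Such an element exists by the $T=1$ case of \Cref{lemma:supsup} applied to $f\equiv0$, since $\HhP\neq\emptyset$; its atoms are distinct and there are at most $d$ of them. If $d'<d$, pad with further children of weight $0$ placed at distinct points of $\XP$, which is possible since $d\leq|\XP|$. Attach this same weight and location pattern below every node at depth at least $N$.

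The resulting $\theta'$ satisfies conditions 1–3 at every internal node, so $Q_{\theta'}\in\bHhP^\infty$. Its marginal on $\X^N$ is $Q_\theta$. Hence
$$\E_{Q_{\theta'}}[f^{x^t}_\tau]=\E_{Q_\theta}[g]\geq\E_Q[f^{x^t}_\tau],$$
up to the slack. Taking suprema over $Q$ and $\tau$ concludes the proof.
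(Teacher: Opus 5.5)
Your proposal is correct and takes essentially the paper's route: you reduce to a finite horizon $N\geq\tau$ through the Borel function $g$ (the paper's $h_\tau$) on $\XP^N$, apply \Cref{lemma:supsup}, and then take suprema over $\tau$ and over the horizon. You also make explicit steps the paper leaves implicit: that $N$-marginals of $\HhP^\infty$ lie in $\HhP^N$, that every $\theta\in\Theta_N$ extends to some $\theta'\in\Theta_\infty$ (your padding construction), and the degenerate case $\HhP=\emptyset$. One small fix: when $\E_Q[f^{x^t}_\tau]=\infty$, the ``arbitrarily small slack'' should instead read ``arbitrarily large values''.
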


\begin{proof}
Fix $x^t\in\XP^t$ and let $\tau\in\T_{T'}$ for some $T'\geq0$. We may regard
$\tau$ as a function on $\X^{T'}$. Define $h_\tau:\XP^{T'}\to[0,\infty]$ by
$$h_\tau(y^{T'})=f^{x^t}_{\tau(y^{T'})}(y^{\tau(y^{T'})})\,.$$
The function $h_\tau$ is Borel. Indeed, since $\tau$ is bounded by $T'$,
$$h_\tau(y^{T'})=\sum_{s=0}^{T'}\mathbf 1_{\{\tau=s\}}(y^{T'})f^{x^t}_s(y^s)\,,$$
and each set $\{\tau=s\}$ is Borel in $\X^{T'}$. By \Cref{lemma:supsup},
$$\sup_{Q\in\HhP^\infty}\E_Q[f^{x^t}_\tau] = \sup_{P\in\HhP^{T'}}\E_P[h_\tau]=\sup_{P\in\bHhP^{T'}}\E_P[h_\tau]=\sup_{Q\in\bHhP^\infty}\E_Q[f^{x^t}_\tau]\,.$$
Taking the supremum first over $\tau\in\T_{T'}$ and then over $T'\geq0$ gives the claim.
\end{proof}

\subsection{Masks}

For $T\geq 1$ finite, we define a \emph{mask} on $\Tt_T$ as a subset $M\subseteq\Tt_T$ such that: 
\begin{enumerate}
    \item[$(i)$] $\ROOT\in M$; 
    \item[$(ii)$] if $v\in M$ and $v'$ is an ancestor of $v$, then $v'\in M$; 
    \item[$(iii)$] if $v\in M$ and $|v|<T$, then either $\ch(v)\subseteq M$, or $\ch(v)\cap M = \emptyset$.
\end{enumerate}
We denote the set of all the masks on $\Tt_T$ as $\mathcal{M}_T$. We remark that $\mathcal M_T$ is a finite set. A node $v \in M$ is called a leaf of $M$ if  $\ch(v)\cap M=\emptyset$. We let $\Ell_M$ be the set of all the leaves of $M$ and define $\I_M=M\setminus\Ell_M$.

The reason we are introducing masks is that they help study  stopping times. Indeed, a pair $(\tau, \theta) \in \T_T \times \Theta_T$ naturally induces a mask $M_{\tau,\theta} \in \mathcal{M}_T$. Intuitively, the parameterisation $\theta$ ``fills'' the tree with locations, and the stopping time $\tau$ tells us where to truncate each branch based on those locations. Formally, we define $M_{\tau,\theta}$ recursively: the root $\ROOT \in M_{\tau,\theta}$, and for any internal node $v \in \I_T \cap M_{\tau,\theta}$, we include all its children $\ch(v)$ in $M_{\tau,\theta}$ if and only if $\tau(\xi_\theta(v)) > |v|$.

\begin{lemma}\label{lemma:surj}
    For $T\geq 1$ finite, the map $\Upsilon\,:\,\T_T \times \Theta_T \to \mathcal{M}_T\times\Theta_T$, given by $\Upsilon(\tau, \theta) = (M_{\tau,\theta},\theta)$, is surjective.
\end{lemma}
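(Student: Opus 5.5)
Since $\Upsilon$ leaves the second coordinate unchanged, surjectivity reduces to the following: for each fixed $\theta\in\Theta_T$ and each mask $M\in\mathcal M_T$, build a stopping time $\tau\in\T_T$ with $M_{\tau,\theta}=M$. The key fact is that $v\mapsto\xi_\theta(v)$ is injective, by condition 3 in the definition of $\Theta_T$. Hence a finite history $x^s$ equals $\xi_\theta(v)$ for at most one node $v$, and the stopping rule can be read off the tree.

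The construction is as follows. For $x^T\in\X^T$, let $s^\ast(x^T)$ be the largest $s\in[0:T]$ such that $x^s=\xi_\theta(v_s)$ for some node $v_s\in M$ with $|v_s|=s$. This is well defined: $s=0$ always qualifies because $\ROOT\in M$ and $\xi_\theta(\ROOT)=\emptyset$. By the ancestor-closure property $(ii)$, together with the fact that prefixes of $\xi_\theta(v)$ are $\xi_\theta$ of the ancestors of $v$, the admissible $s$ form an initial segment $[0:s^\ast]$. We then set $\tau(x^T)=s^\ast(x^T)$.

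Next, check that $\tau$ is a stopping time, i.e.\ that $\{\tau\le s\}$ depends only on $x^s$. We have $\tau(x^T)>s$ if and only if $x^{s+1}=\xi_\theta(u)$ for some $u\in M$ with $|u|=s+1$. This event depends only on $x^{s+1}$. To get dependence on $x^s$ alone, use $(iii)$: it is equivalent to $x^s=\xi_\theta(v)$ for some $v\in M$ with $|v|=s$, $\ch(v)\subseteq M$, and $x_{s+1}\in\{\hat x_u: u\in\ch(v)\}$. That equivalence still involves $x_{s+1}$, so I modify the definition of $\tau$. Define $\tau(x^T)$ as the first $s$ such that $x^s$ is not of the form $\xi_\theta(v)$ with $v\in\I_M$. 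Concretely, the process stops at $s$ as soon as either $x^s$ has left the tree image, or $x^s=\xi_\theta(v)$ with $v$ a leaf of $M$. Each event $\{\tau=s\}$ is then a finite Boolean combination of conditions of the form $x^r=\xi_\theta(v)$ for $r\le s$. These are singleton conditions on $\X^r$, hence Borel since $\X$ is standard Borel. So $\tau\in\T_T$, and $\tau\le T$ because nodes at depth $T$ are leaves.

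Finally, verify $M_{\tau,\theta}=M$ by induction on depth. The root belongs to both sets. Take $v\in M\cap M_{\tau,\theta}$ with $|v|<T$. By injectivity, $\tau(\xi_\theta(v))>|v|$ holds exactly when $v\in\I_M$, that is, exactly when $\ch(v)\subseteq M$ by $(iii)$. Comparing with the recursive definition of $M_{\tau,\theta}$, the children of $v$ belong to both sets or to neither. Nodes outside $M$ are never reached, because ancestor closure holds for both $M$ and $M_{\tau,\theta}$. The main subtlety is the stopping-time and measurability verification, together with the injectivity argument that makes the value of $\tau$ at $\xi_\theta(v)$ unambiguous. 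Injectivity is also why condition 3 was imposed on $\Theta_T$.
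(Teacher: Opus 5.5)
Your final construction, stopping at the first $s$ with $x^s$ not the trace of a node in $\I_M$, is exactly the paper's $\tau(x^\infty)=\inf\{t\geq 0: x^t\notin\mathcal C_t\}$. Your measurability, boundedness and depth-induction checks follow the paper's proof, and you were right to discard the initial $s^\ast$ attempt because it is not a stopping time. The one step you credit too quickly to injectivity is $v\in\I_M\Rightarrow\tau(\xi_\theta(v))>|v|$: it also needs every ancestor of $v$ to lie in $\I_M$ (each is in $M$ by $(ii)$ and has a child in $M$), which the paper spells out.
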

\begin{proof} 
    If $\Theta_T=\emptyset$, then $\T_T \times \Theta_T = \mathcal{M}_T\times\Theta_T = \emptyset$, so the claim is trivial. We thus assume that $\Theta_T\neq\emptyset$. Fix  $M \in \mathcal{M}_T$ and  $\theta \in \Theta_T$. We will construct a stopping time $\tau \in \T_T$ such that $M_{\tau,\theta} = M$. For each $t \in [0:T]$, define
    $$ \mathcal{C}_t = \left\{ \xi_\theta(v) \in \XP^t \,:\, v \in\I_M \text{ and } |v|=t \right\} \,. $$
    Since $t$ is finite, there are finitely many nodes at depth $t$, so $\mathcal{C}_t$ is finite and hence Borel. We note that the injectivity of $v\mapsto \xi_\theta(v)$ implies that, for every node $v$ at depth $t$ we have $\xi_\theta(v)\in\mathcal C_t$ if and only if $v\in \I_M$.  In particular, whether a node $v$  at depth $t$ is a leaf of $M$ can be read off from its trace $\xi_\theta(v)$.  We then define, for $x^\infty \in \X^\infty$,
    $$ \tau(x^\infty) = \inf \left\{ t \geq 0 \,:\, x^t \notin \mathcal{C}_t \right\} \,, $$ with the convention $\inf\emptyset = \infty$. For every  $t \in [0:T]$,
    $$ \{\tau \leq t\} = \bigcup_{s=0}^t \{x^\infty \in \X^\infty : x^s \notin \mathcal{C}_s\}\,, $$
    which belongs to the sigma-field generated by the first $t$ coordinates. Thus, $\tau\in\T_\infty$. Moreover, 
    since $T<\infty$, $\mathcal{C}_T = \emptyset$, which implies  $\tau \leq T$. Hence, $\tau \in \T_T$. 

    We now show that $M_{\tau,\theta}=M$. First, let us note that, for every $v\in M$, we have $\tau(\xi_\theta(v))\geq |v|$. More precisely, if $t=|v|$ and $x^\infty$ is any extension of $\xi_\theta(v)$ (i.e., $\xi_\theta(v)=x^t$), then $\tau(x^\infty)\geq t$. Indeed, let $v_0,\dots,v_t$ be the nodes on the path from $\ROOT$ to $v$, with $|v_s|=s$. Since $M$ is a mask and $v\in M$, all ancestors $v_0,\dots,v_t$ belong to $M$. Moreover, for every $s<t$, the node $v_s$ has the descendant $v$ in $M$, and therefore $v_s\in\I_M$. Hence $\xi_\theta(v_s)\in\mathcal C_s$ for every $s<t$. If $x^\infty$ is an extension of $\xi_\theta(v)$, then $x^s=\xi_\theta(v_s)$ for every $s\leq t$. Thus $x^s\in\mathcal C_s$ for every $s<t$, and by the definition of $\tau$ we obtain $\tau(x^\infty)\geq t=|v|$.

    We can now conclude that $M_{\tau,\theta}=M$ by induction on the depth. At depth $0$, both masks contain the root. Assume that the two masks coincide up to depth $t$, and let $v$ be a node of depth $t$ in this common set. By the defining property of a mask, either $\ch(v)\subseteq M$ or $v\in\Ell_M$. Using that $\tau(\xi_\theta(v))\geq t$, in the first case we must have that $\tau(\xi_\theta(v))> |v|$ and so $\ch(v)\subseteq M_{\tau,\theta}$, while in the second case we obtain $\tau(\xi_\theta(v))=|v|$ and so $\ch(v)\cap M_{\tau,\theta} = \emptyset$. Thus, the two masks coincide also at level $t+1$, which concludes the proof.
\end{proof}

\begin{comment}
\comm{Here add as corollary that if e-process is e-process for non coarsened hypothesis than it is for the coarsened one? Then discuss that all processes in the candidate form are for the coarsened one. so argue what will be our main argument... Maybe only this should stay in the main text...}
\end{comment}
\subsection{Semi-analyticity}\label{app:coarse-semian}
We use the following standard facts about upper semi-analytic functions. A function $h:\A\to[-\infty,\infty]$ is upper semi-analytic if it can be written as
$$h(a)=\sup_{b\in\B}g(a,b)$$
for some standard Borel space $\B$ and some Borel function $g:\A\times\B\to[-\infty,\infty]$.\footnote{On standard Borel spaces, this is equivalent to the usual definition of upper semi-analyticity, namely that the upper level sets $\{a\in\A:h(a)>c\}$ are analytic for every $c\in\R$.} Countable suprema and finite maxima of upper semi-analytic functions are upper semi-analytic.

\begin{lemma}\label{lemma:continuation-envelope-usa}
Assume that $S$ is Borel. Let $f=(f_s)_{s\geq 1}$ be a sequence with $f_s:\XP^s\to[0,\infty]$ Borel for every $s\geq 1$. Fix $t\geq 1$. For $x^t\in\XP^t$, define $f^{x^t}=(f^{x^t}_s)_{s\geq 0}$ by $f^{x^t}_0=f_t(x^t)$ and $f^{x^t}_s(y^s)=f_{t+s}(x^t,y^s)$ for $s\geq 1$ and $y^s\in\XP^s$. Define $u:\XP^t\to[0,+\infty]$ by
$$u(x^t)=\sup_{Q\in\HhP^\infty}\sup_{\tau\in\T_\star}\E_Q[f^{x^t}_\tau]\,.$$
Then $u$ is upper semi-analytic.
\end{lemma}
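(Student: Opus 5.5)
We reduce the supremum over $\HhP^\infty\times\T_\star$ to a countable supremum of finite maxima of Borel-parametrised suprema over standard Borel parameter spaces, and then invoke the closure properties of upper semi-analytic functions recalled above.

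\textbf{Step 1: pass to the coarsened null and fix the horizon.} By \Cref{cor:supsup}, applied to the process $f$ (with $f_0$ set arbitrarily), for every $x^t\in\XP^t$ the supremum defining $u$ is unchanged if $\HhP^\infty$ is replaced by $\bHhP^\infty$. Splitting the stopping times by their bound gives
$$u(x^t)=\sup_{T\geq1}\,\sup_{Q\in\bHhP^\infty}\,\sup_{\tau\in\T_T}\E_Q[f^{x^t}_\tau]\,,$$
where the terms with $T=0$ are dominated by those with $T\geq1$. For $\tau\in\T_T$, $\E_Q[f^{x^t}_\tau]$ depends on $Q$ only through its marginal on $\XP^T$. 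Every marginal of a law in $\bHhP^\infty$ lies in $\bHhP^T$, and conversely every $Q_\theta$ with $\theta\in\Theta_T$ extends to an element of $\bHhP^\infty$: at each leaf, append $d$-ary continuations that repeat one fixed $\theta_0\in\Theta_1$, which exists because $\HhP\neq\emptyset$. Hence the inner double supremum equals $\sup_{\theta\in\Theta_T}\sup_{\tau\in\T_T}\E_{Q_\theta}[f^{x^t}_\tau]$. Since a countable supremum of upper semi-analytic functions is upper semi-analytic, it suffices to treat a fixed finite $T$. If $\HhP=\emptyset$, $u$ is defined on an empty set and there is nothing to prove.

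\textbf{Step 2: replace stopping times by masks.} For fixed $\theta$ and $\tau\in\T_T$, the law $Q_\theta$ is supported on the finitely many traces $\xi_\theta(v)$, $v\in\Ell_T$. The stopping time $\tau$ stops along the branch to $v$ exactly at the leaf of $M_{\tau,\theta}$ that is the ancestor of $v$. Using \eqref{eq:pi}, this gives
$$\E_{Q_\theta}[f^{x^t}_\tau]=\sum_{w\in\Ell_{M_{\tau,\theta}}}\pi_\theta(w)\,f^{x^t}_{|w|}(\xi_\theta(w))=:G_M(x^t,\theta)\Big|_{M=M_{\tau,\theta}}\,,$$
with the convention $0\cdot\infty=0$. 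By \Cref{lemma:surj}, as $\tau$ ranges over $\T_T$ the induced masks $M_{\tau,\theta}$ exhaust $\mathcal M_T$ for every fixed $\theta$. Therefore
$$\sup_{\theta\in\Theta_T}\sup_{\tau\in\T_T}\E_{Q_\theta}[f^{x^t}_\tau]=\max_{M\in\mathcal M_T}\,\sup_{\theta\in\Theta_T}G_M(x^t,\theta)\,.$$
This is a finite maximum, because $\mathcal M_T$ is finite.

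\textbf{Step 3: measurability, which is the main obstacle.} It remains to show that each $x^t\mapsto\sup_{\theta\in\Theta_T}G_M(x^t,\theta)$ is upper semi-analytic.
\begin{itemize}
\item By \Cref{lemma:ThBor}, $\Theta_T$ is a Borel subset of the standard Borel space $(\XP\times[0,1])^{\V_T}$, using that $S$ is Borel. It is therefore standard Borel, and nonempty.
\item The maps $\theta\mapsto\xi_\theta(w)$ and $\theta\mapsto\pi_\theta(w)$ are, respectively, coordinate projections and finite products of coordinates, hence Borel.
\item $(x^t,\theta)\mapsto f_{t+|w|}(x^t,\xi_\theta(w))$ is a composition of the Borel map $f_{t+|w|}$ with a Borel map, hence Borel.
\item $G_M$ is a finite sum of products of Borel $[0,\infty]$-valued functions under the convention $0\cdot\infty=0$, and this product is Borel on $[0,\infty]^2$. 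So $G_M$ is Borel on $\XP^t\times\Theta_T$.
\end{itemize}
By the definition recalled in this appendix, each $\sup_\theta G_M$ is then upper semi-analytic. Finite maxima and countable suprema preserve this property, so $u$ is upper semi-analytic.

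The delicate points are two. First, the coarsening in Step 1 is what makes the parameter space standard Borel and the stopping-time supremum finite. Second, the leaf-sum identity in Step 2 relies on the injectivity of $v\mapsto\xi_\theta(v)$, so that distinct tree nodes correspond to disjoint events under $Q_\theta$.
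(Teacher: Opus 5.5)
Your proposal is correct and follows essentially the same route as the paper. It passes to $\bHhP^\infty$ via \Cref{cor:supsup}, splits by the horizon $T$, and replaces stopping times by the finitely many masks using \Cref{lemma:surj}; it then concludes from the Borel measurability of the leaf sums on the Borel set $\Theta_T$ (\Cref{lemma:ThBor}). Your explicit check that every $Q_\theta$ with $\theta\in\Theta_T$ extends to an element of $\bHhP^\infty$ is a welcome addition, since the paper uses this implicitly when it rewrites $u_T$ as a supremum over $\Theta_T$.
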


\begin{proof}
    Fix $t\geq 1$. For any finite $T\geq 1$, define $u_T:\XP^t\to[0,\infty]$ as 
    $$u_T(x^t) = \sup_{\tau\in\T_T}\sup_{Q\in\bHhP^\infty}\E_{Q}[f^{x^t}_\tau]\,.$$
    
    Fix any pair $(Q, \tau)\in\bHhP^\infty\times\T_T$. Since $\tau$ is bounded by $T$, there is $\theta\in\Theta_T$ such that $\E_{Q}[f^{x^t}_\tau] = \E_{Q_\theta}[f^{x^t}_\tau]$.  So, we can rewrite the definition of $u_T$ as
    $$u_T(x^t) = \sup_{\tau\in\T_T}\sup_{\theta\in\Theta_T}\E_{Q_\theta}[f^{x^t}_\tau]\,.$$
    For any $M\in\mathcal M_T$, we define the map $h_M:\XP^t\times\Theta_T\to[0,\infty]$ as 
    $$h_M(x^t, \theta) = \sum_{v\in\Ell_M}\pi_\theta(v)f_{t+|v|}(x^t, \xi_\theta(v))\,.$$
    
    Now, fix a pair $(\tau,\theta)$, and let $M_{\tau,\theta}=\Upsilon(\tau,\theta)$ be the mask induced by the pair. By \eqref{eq:pi}, it follows that 
    $$h_{M_{\tau,\theta}}(x^t, \theta) = \E_{Q_\theta}[f^{x^t}_\tau]\,.$$ 
    In particular, thanks to the surjectivity of $\Upsilon$ from \Cref{lemma:surj}, we can rewrite the definition of $u_T$ as
    $$u_T(x^t) = \max_{M\in\mathcal M_T}\sup_{\theta\in\Theta_T}h_M(x^t, \theta)\,,$$
    where we can write a maximum since $\mathcal M_T$ is finite.

    For each $M\in\mathcal M_T$, $h_M$ is Borel. Indeed, since we are assuming that $S$ is Borel, the domain $\XP^t\times\Theta_T$  is Borel by \Cref{lemma:ThBor}, each $f_{t+|v|}$ is Borel by assumption, and both $\xi_\theta$ and $\pi_\theta$ are Borel maps. In particular, the map $x^t\mapsto \sup_{\theta\in\Theta_T}h_M(x^t, \theta)$ is upper semi-analytic, as the supremum over a Borel domain of a Borel function. We conclude that $u_T$ is upper semi-analytic as the  maximum over finitely many upper semi-analytic functions. 

    Finally, by \Cref{cor:supsup} $u=\sup_{T\geq 1}u_T$, which is upper semi-analytic as the countable supremum of upper semi-analytic functions. 
\end{proof}

\end{document}